 \newtheorem{theorem}{Theorem}
 \newtheorem{lemma}[theorem]{Lemma}
\theoremstyle{definition}
\theoremstyle{remark}
\begin{document}
\title[Dispersive Systems]{Fourth-order dispersive systems on the one-dimensional torus}
\author[H.~Chihara]{Hiroyuki Chihara}
\address{Department of Mathematics and Computer Science, Kagoshima University, Kagoshima 890-0065, Japan}
\email{chihara@sda.att.ne.jp}
\thanks{Supported by JSPS Grant-in-Aid for Scientific Research \#23340033.}
\subjclass[2010]{Primary 35G40, Secondary 47G30, 53C44}
\keywords{dispersive system, initial value problem, well-posedness, gauge transform, energy method}
\begin{abstract}
We present the necessary and sufficient conditions of 
the well-posedness of the initial value problem for 
certain fourth-order linear dispersive systems on the one-dimensional torus. 
This system is related with 
a dispersive flow for closed curves into compact Riemann surfaces. 
For this reason, 
we study not only the general case but also the corresponding special case in detail. 
We apply our results on the linear systems to the fourth-order dispersive flows. 
We see that if the sectional curvature of the target Riemann surface is constant, 
then the equation of the dispersive flow satisfies our conditions of the well-posedness.  
\end{abstract}
\maketitle
\section{Introduction}
\label{section:introduction} 
We study the initial value problem for 
a system of fourth-order dispersive partial differential equations 
on the one-dimensional torus of the form 
\begin{alignat}{2}
  L\vec{u}
& =
  \vec{f}(t,x) 
& \quad
& \text{in}
  \quad
  \mathbb{R}\times\mathbb{T},
\label{equation:pde}      
\\
  \vec{u}(0,x)
& =
  \vec{\phi}(x)
& \quad
& \text{in}
  \quad
  \mathbb{T},
\label{equation:data}
\end{alignat}
where 
$$
L=I\frac{\partial}{\partial t}+iP,
\quad
P= ED_x^4+A(x)D_x^3+B(x)D_x^2+C(x)D_x+D(x), 
$$
$\vec{u}(t,x)={}^t[u_1(t,x),u_2(t,x)]$ 
is a $\mathbb{C}^2$-valued unknown function of 
$(t,x)\in\mathbb{R}\times\mathbb{T}$, 
$\mathbb{T}=\mathbb{R}/2\pi\mathbb{Z}$, 
$i$ is the unit of imaginary numbers, 
$D_x=-i\partial/\partial x$, 
$\vec{\phi}(x)={}^t[\phi_1(x),\phi_2(x)]$ 
and 
$\vec{f}(t,x)={}^t[f_1(t,x),f_2(t,x)]$ 
are given functions, 
$$
I
=
\begin{bmatrix}
1 & 0 \\ 0 & 1
\end{bmatrix},
\quad
E
=
\begin{bmatrix}
1 & 0 \\ 0 & -1
\end{bmatrix},
$$
$$
A(x)
=
\begin{bmatrix}
a_{11}(x) & a_{12}(x) \\ a_{21}(x) & a_{22}(x)
\end{bmatrix},
\quad
B(x)
=
\begin{bmatrix}
b_{11}(x) & b_{12}(x) \\ b_{21}(x) & b_{22}(x)
\end{bmatrix},
$$
$$
C(x)
=
\begin{bmatrix}
c_{11}(x) & c_{12}(x) \\ c_{21}(x) & c_{22}(x)
\end{bmatrix},
\quad
D(x)
=
\begin{bmatrix}
d_{11}(x) & d_{12}(x) \\ d_{21}(x) & d_{22}(x)
\end{bmatrix},
$$
$a_{jk}(x), b_{jk}(x), c_{jk}(x) \in C^\infty(\mathbb{T})$, 
and 
$C^\infty(\mathbb{T})$ is the set of all complex-valued smooth functions on $\mathbb{T}$. 
\par
The present paper is mainly concerned with 
the well-posedness of the initial value problem 
\eqref{equation:pde}-\eqref{equation:data}. 
A non-Kowalewskian is said to be dispersive-type if the initial value problem for it 
is expected to be well-posed in both directions in time. 
There are many papers studying the well-posedness of the initial value problem 
for dispersive equations. 
Unfortunately, however, the results on the necessary and sufficient conditions 
of the well-posedness are limited to 
the Schr\"odinger evolution equations on the torus, 
and one-dimensional cases. 
See 
\cite{chihara1}, 
\cite{chihara3}, 
\cite{mizohata}, 
\cite{mizuhara}, 
\cite{takeuchi}, 
\cite{tarama0} 
and 
\cite{tarama}. 
Generally speaking, 
if there exists a trapped classical orbit generated by the principal symbol, 
then the local smoothing effect of solutions of dispersive equations breaks down. 
See \cite{doi} for instance. 
In particular, if the domain of the space variables is compact, 
the smoothing effect does not occurs at all. 
For this reason, 
in case of the torus, 
restrictions on the equations for the well-posedness becomes stronger, 
and it is relatively easy to obtain the necessary and sufficient conditions. 
\par
Here we change the subject. 
In the last decade the geometric analytic studies on dispersive flows between manifolds 
have been relatively attractive in mathematics. 
Most of the equations are originated in classical mechanics. 
Some results were obtained by the geometric point of view, 
and another ones were based on the analytic approach. 
In any case, most of the results are concerned with 
the relationship between the geometric settings and the structure of the equations.  
See, e.g., 
\cite{CSU}, 
\cite{chihara2}, 
\cite{CO1}, 
\cite{CO2}, 
\cite{koiso1}, 
\cite{koiso2}, 
\cite{koiso3}, 
\cite{onodera0},  
\cite{onodera1},  
\cite{onodera11}, 
\cite{onodera12}  
and references therein. 
\par
The system \eqref{equation:pde} is related with 
a fourth-order dispersive flow for closed curves into compact Riemann surfaces of the form 
\begin{equation}
u_t
=
a\tilde{J}(u)\nabla_x^3u_x
+
\{1+bg_u(u_x,u_x)\}\tilde{J}(u)\nabla_xu_x
+
cg_u(\nabla_xu_x,u_x)\tilde{J}(u)u_x
\quad
\text{in}
\quad
\mathbb{R}\times\mathbb{T},
\label{equation:onoderasystem} 
\end{equation}
where 
$\mathbb{R}\times\mathbb{T}\ni(t,x) \mapsto u(t,x){\in}N$, 
$(N,\tilde{J},g)$ is a compact Riemann surface 
with a complex structure $\tilde{J}$ and a K\"ahler metric $g$,  
$u_t=du(\partial/\partial t)$, 
$u_x=du(\partial/\partial x)$, 
$du$ is the differential of the mapping $u$, 
$\nabla$ is the induced connection for the Levi-Civita connection $\nabla^N$ of $(N,\tilde{J},g)$, 
$a\in\mathbb{R}\setminus\{0\}$ and $b,c\in\mathbb{R}$ are constants. 
Note that 
$C(t)=\{u(t,x)\ \vert \ x\in\mathscr{T}\}$  
is a closed curve on $N$ for any fixed $t\in\mathbb{R}$, 
and 
$u$ describes the motion of a closed curve subject to the equation \eqref{equation:onoderasystem}. 
From a point of view of linear partial differential equations, 
the equation \eqref{equation:onoderasystem} has a loss of derivative of order one,  
and the classical energy estimates of solutions never work. 
Moreover, no smoothing effect of solutions can be expected 
since the sauce of the mapping is a compact space $\mathbb{T}$. 
Recently, inspite of this difficulty, 
Onodera (\cite{onodera2}) has been studying 
the initial value problem for \eqref{equation:onoderasystem}, 
and succeeded in the construction of time-local solutions. 
Unfortunately, however, his approach is based on massive and complicated computations, 
and is not comprehensive. 
In other words, it is very difficult to understand 
how he can resolve the loss of derivative of order one. 
\par
The purpose of this paper is to give the necessary and sufficient conditions of 
the well-posedness of the initial value problem \eqref{equation:pde}-\eqref{equation:data}, 
and to have insight into the structure of \eqref{equation:onoderasystem}. 
Indeed, we also study the initial value problem for a special system of the form 
\begin{alignat}{2}
  \mathscr{L}\vec{w}
& =
  \vec{h}(t,x) 
& \quad
& \text{in}
  \quad
  \mathbb{R}\times\mathbb{T},
\label{equation:pde2}      
\\
  \vec{w}(0,x)
& =
  \vec{w}_0(x)
& \quad
& \text{in}
  \quad
  \mathbb{T},
\label{equation:data2}
\end{alignat}
where 
$$
\mathscr{L}
=
I
\frac{\partial}{\partial t}
+
J
\frac{\partial^4}{\partial x^4}
+
\beta(x)
\frac{\partial^2}{\partial x^2}
+
\gamma(x)
\frac{\partial}{\partial x},
$$
$\vec{w}(t,x)={}^t[w_1(t,x),w_2(t,x)]$ 
is a $\mathbb{R}^2$-valued unknown function of 
$(t,x)\in\mathbb{R}\times\mathbb{T}$, 
$\vec{w}_0(x)$ and $\vec{h}(t,x)$ are given functions, 
$$
J
=
\begin{bmatrix}
0 & -1 \\ 1 & 0
\end{bmatrix},
\quad
\beta(x)
=
\begin{bmatrix}
\beta_{11}(x) & \beta_{12}(x) \\ \beta_{21}(x) & \beta_{22}(x)
\end{bmatrix}, 
\quad
\gamma(x)
=
\begin{bmatrix}
\gamma_{11}(x) & \gamma_{12}(x) \\ \gamma_{21}(x) & \gamma_{22}(x)
\end{bmatrix},
$$
and $\beta_{jk}(x)$ and $\gamma_{jk}(x)$ 
are real-valued smooth functions on $\mathbb{T}$. 
The system \eqref{equation:pde2} is a special case of \eqref{equation:pde}, 
and very closed to \eqref{equation:onoderasystem}. 
Obviously, the prospect of the analysis of the differential operator $\mathscr{L}$ 
is bad since its principal part has only the off-diagonal components. 
Generally speaking, however, studies on such real-valued linear systems are useful 
for solving the initial value problem for dispersive flows into almost Hermitian manifolds. 
Indeed, if the matrix like $J$ is replaced by the almost complex structure, 
the methods established for the systems are applicable to some dispersive flows 
only with minor changes in many cases.   
The necessary and sufficient conditions of $L^2$-well-posedness of 
the initial value problem \eqref{equation:pde2}-\eqref{equation:data2} 
are reduced to those of \eqref{equation:pde}-\eqref{equation:data}. 
Moreover, we give the direct proof of the sufficiency of the well-posedness of 
\eqref{equation:pde2}-\eqref{equation:data2} 
with application to \eqref{equation:onoderasystem} in mind. 
We remark that the sufficiency of our conditions on the well-posedness of 
\eqref{equation:pde}-\eqref{equation:data} 
work also if the coefficients depend on $t$. 
Finally, we introduce a moving frame along the curve described by $u(t,\cdot)$ on $N$ 
for each $t$, and obtain an $\mathbb{R}^2$-valued system 
from the equation of a higher order spatial derivative of $u$. 
We see that if the sectional curvature of the target Riemann surface is constant, 
then this system satisfies the the sufficient conditions of $L^2$-well-posedness. 
We believe that our approach in the present paper 
will give a perspective to \cite{onodera2}. 
\par
To state our results on the well-posedness of 
\eqref{equation:pde}-\eqref{equation:data}, 
we introduce some function spaces. 
We denote by $L^2(\mathbb{T};\mathbb{C}^2)$ 
the set of $\mathbb{C}^2$-valued square integrable functions on $\mathbb{T}$. 
We denote by 
$C\bigl(\mathbb{R};L^2(\mathbb{T};\mathbb{C}^2)\bigr)$ 
and 
$L^1_{\text{loc}}\bigl(\mathbb{R};L^2(\mathbb{T};\mathbb{C}^2)\bigr)$ 
the set of all $L^2(\mathbb{T};\mathbb{C}^2)$-valued continuous functions on $\mathbb{T}$, 
and the set of all $L^2(\mathbb{T};\mathbb{C}^2)$-valued 
locally integrable functions on $\mathbb{T}$ respectively. 
Our main results are the following. 
\begin{theorem}
\label{theorem:main} 
The following conditions {\rm (I)} and {\rm (II)} are mutually equivalent. 
\begin{itemize}
\item[{\rm (I)}] 
The initial value problem \eqref{equation:pde}-\eqref{equation:data} is $L^2$-well-posed, 
that is, for any $\vec{\phi} \in L^2(\mathbb{T};\mathbb{C}^2)$ 
and for any $\vec{f} \in L^1_{\text{\rm loc}}\bigl(\mathbb{R}; L^2(\mathbb{T};\mathbb{C}^2)\bigr)$, 
\eqref{equation:pde}-\eqref{equation:data} has a unique solution 
$\vec{u} \in C\bigl(\mathbb{R}; L^2(\mathbb{T};\mathbb{C}^2)\bigr)$.  
\item[{\rm (II)}] 
The coefficients 
$a_{jk}(x)$, $b_{jk}(x)$ and $c_{jk}(x)$ satisfy the following conditions  
\begin{align}
  \operatorname{Im} 
  \int_0^{2\pi}
  a_{11}(x)
  dx
& =
  0,
\label{equation:condition01}
\\
  \operatorname{Im} 
  \int_0^{2\pi}
  a_{22}(x)
  dx
& =
  0,
\label{equation:condition02}
\end{align}
\begin{align}
  \operatorname{Im} 
  \int_0^{2\pi}
  \left\{
  b_{11}(x)
  -
  \frac{3a_{11}(x)^2-4a_{12}(x)a_{21}(x)}{8}
  \right\}
  dx
& =
  0,
\label{equation:condition03}
\\
  \operatorname{Im} 
  \int_0^{2\pi}
  \left\{
  b_{22}(x)
  +
  \frac{3a_{22}(x)^2-4a_{12}(x)a_{21}(x)}{8}
  \right\}
  dx
& =
  0,
\label{equation:condition04}
\end{align}
\begin{align}
  \operatorname{Im} 
  \int_0^{2\pi}
  \biggl\{
  c_{11}(x)
& +
  \frac{i}{2}
  a_{12}^\prime(x)a_{21}(x)
\nonumber
\\
& -
  \frac{a_{11}(x)b_{11}(x)-a_{12}(x)b_{21}(x)-a_{21}(x)b_{12}(x)}{2}
\nonumber
\\
& -
  \frac{a_{11}(x)^3+4a_{11}(x)a_{12}(x)a_{21}(x)-2a_{12}(x)a_{21}(x)a_{22}(x)}{8}
  \biggr\}
  dx
  =
  0, 
\label{equation:condition05} 
\\
  \operatorname{Im} 
  \int_0^{2\pi}
  \biggl\{
  c_{22}(x)
& -
  \frac{i}{2}
  a_{12}(x)a_{21}^\prime(x)
\nonumber
\\
& +
  \frac{a_{22}(x)b_{22}(x)-a_{12}(x)b_{21}(x)-a_{21}(x)b_{12}(x)}{2}
\nonumber
\\
& +
  \frac{a_{22}(x)^3-4a_{12}(x)a_{21}(x)a_{22}(x)+2a_{11}(x)a_{12}(x)a_{21}(x)}{8}
  \biggr\}
  dx
  =
  0.
\label{equation:condition06} 
\end{align}
\end{itemize}
\end{theorem}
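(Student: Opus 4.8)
The plan is to diagonalize the principal symbol by a smooth change of unknown and then to remove the obstruction to energy estimates by a pseudodifferential gauge transform, reading off the surviving invariants as the conditions \eqref{equation:condition01}--\eqref{equation:condition06}. Concretely, for the sufficiency direction (II)$\Rightarrow$(I), I would first treat $P$ as a matrix-valued classical pseudodifferential operator on $\mathbb{T}$ with principal symbol $E\xi^4$. Since $E=\mathrm{diag}(1,-1)$ already has distinct eigenvalues, no zeroth-order diagonalization of the leading term is needed, but the subprincipal and lower-order terms $A(x)D_x^3$, $B(x)D_x^2$, $C(x)D_x$ mix the two components. The strategy is to construct an invertible $2\times2$ classical pseudodifferential operator $\Theta = I + \Theta_{-1}D_x^{-1} + \Theta_{-2}D_x^{-2} + \cdots$ (a parametrix-type conjugator) such that $\Theta^{-1}P\Theta$ has, modulo smoothing and modulo the diagonal, no skew-Hermitian obstruction in the orders $\xi^3,\xi^2,\xi^1$; the off-diagonal entries in these orders can be killed because the separation of the eigenvalues $\pm\xi^4$ is of size $\xi^4$, which dominates the loss. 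After this step one is left with a system whose non-self-adjoint part lives only on the diagonal, at orders $\xi^3$, $\xi^2$, $\xi^1$, and is governed precisely by the imaginary parts of certain coefficient combinations.

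Next I would perform the scalar gauge transform on each diagonal component: conjugating by $\exp\bigl(\Lambda_j(x)D_x^{?}\bigr)$-type operators, or more simply by multiplication operators $e^{i\psi_j(x)}$ composed with suitable $D_x^{-1}$-order corrections, to absorb the $x$-dependent parts of the first-order diagonal terms. On $\mathbb{T}$ such a gauge exists iff the relevant coefficient has mean zero; the non-removable part is exactly the average over $\mathbb{T}$ of the diagonal first-order coefficient. Pushing the computation through the three orders $\xi^3\to\xi^2\to\xi^1$ and keeping track of the commutator corrections produced at each stage, the residual obstruction at order $\xi^3$ is $\mathrm{Im}\int_0^{2\pi}a_{jj}\,dx$ (hence \eqref{equation:condition01}--\eqref{equation:condition02}), at order $\xi^2$ it is $\mathrm{Im}\int_0^{2\pi}\{b_{jj}\mp(3a_{jj}^2-4a_{12}a_{21})/8\}\,dx$ (the quadratic correction coming from the $a$-gauge feeding into order $2$), giving \eqref{equation:condition03}--\eqref{equation:condition04}, and at order $\xi^1$ the messier expressions in \eqref{equation:condition05}--\eqref{equation:condition06}, where the term $\tfrac{i}{2}a_{12}'a_{21}$ and the cubic-in-$a$ pieces arise from the accumulated off-diagonal-elimination and gauge commutators. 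Once all removable parts are removed, the transformed operator is $I\partial_t + i(E D_x^4 + \text{diagonal real constants}\cdot D_x^3 + \cdots) + (\text{bounded}) + (\text{smoothing})$, which generates a $C_0$-group on $L^2(\mathbb{T};\mathbb{C}^2)$ by the standard energy method (the skew-adjoint high-order part is unitary, the rest is bounded, and Gronwall closes the estimate); transferring back through the invertible $\Theta$ and the gauges yields (I), including the $\vec f$-inhomogeneity via Duhamel.

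For the necessity direction (I)$\Rightarrow$(II), I would argue by contradiction using the frozen-coefficient / WKB construction: if, say, $\mathrm{Im}\int_0^{2\pi}a_{11}\,dx\neq0$, then along the first diagonal mode one can build approximate solutions concentrated at high frequency $n\to\infty$ of the form $e^{in x}e^{-i\int_0^t n^4 dt}v_n(t,x)$ whose $L^2$ norm grows like $e^{c\,n^3 t}$ with $c\neq0$ (the cube coming from the order-$\xi^3$ term and the mean of $a_{11}$), contradicting the uniform bound forced by well-posedness on any fixed time interval; the higher conditions are obtained similarly by looking at the next orders in the amplitude expansion, after the lower-order obstructions have been assumed to vanish. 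The standard reference point here is the Mizohata-type necessity argument adapted to the torus, as in \cite{mizohata}, \cite{tarama0}, \cite{tarama}.

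The main obstacle I expect is the bookkeeping in the off-diagonal elimination: although each individual conjugation is routine, the lower-order terms $B$ and $C$ receive contributions from the commutator of $\Theta_{-1}$ with $ED_x^4$, from $\Theta_{-1}$ acting on $A D_x^3$, and from the second corrector $\Theta_{-2}$, and these must be combined with the diagonal scalar gauges in exactly the right order so that the final invariants come out as the stated combinations (in particular the precise coefficient $3/8$, the cross term $a_{12}a_{21}$, and the derivative term $a_{12}'a_{21}$). Getting these algebraic identities to match \eqref{equation:condition03}--\eqref{equation:condition06} — rather than merely ``some mean-zero condition'' — is where the real work lies, and it is essentially a careful symbol calculus computation on $\mathbb{T}$; I would organize it by first reducing to the case $A\equiv$ const (absorbing $A-\bar A$ by gauge), then $B$, then $C$, tracking at each reduction the induced change in the lower-order coefficients.
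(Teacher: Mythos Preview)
Your overall architecture is correct and matches the paper: conjugate by an invertible order-zero matrix pseudodifferential operator $\Theta=I+\Theta_{-1}+\Theta_{-2}+\Theta_{-3}$ to kill the off-diagonal parts of the $D_x^3$, $D_x^2$, $D_x$ coefficients (using that $E$ anticommutes with any off-diagonal matrix, so the eigenvalue gap $2\xi^4$ lets you solve for each corrector), and then read off the conditions from the resulting decoupled pair of scalar fourth-order equations. The paper carries out exactly this three-step off-diagonal elimination, with explicit symbols $\sigma(\tilde\Lambda_k)=\tfrac12 E X_k^{\mathrm{off}}(x)\varphi_r(\xi)/\xi^k$ for $k=1,2,3$.

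Where you diverge is in the endgame. After diagonalization the paper does \emph{not} redo the scalar gauge/energy argument for sufficiency nor the WKB blow-up for necessity; it simply invokes Mizuhara's theorem (Theorem~\ref{theorem:mizuhara}, \cite{mizuhara}) on each row, which already gives the necessary and sufficient conditions \eqref{equation:mizuhara1}--\eqref{equation:mizuhara3} for the scalar equation $\partial_t v\pm iD_x^4 v + ia D_x^3 v + ib D_x^2 v + ic D_x v + id v = g$ on $\mathbb{T}$. Plugging the computed diagonal coefficients $a_{jj}$, $b_{1,jj}=b_{jj}\pm\tfrac12 a_{12}a_{21}$, $c_{2,jj}$ into Mizuhara's three conditions yields \eqref{equation:condition01}--\eqref{equation:condition06} directly, and the equivalence is two-sided because the conjugation $\Lambda=\Lambda_3\Lambda_2\Lambda_1$ is an $L^2$-automorphism. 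This buys you both directions at once and eliminates all the work in your second and third paragraphs; in particular your proposed Mizohata-type necessity construction is already packaged inside \cite{mizuhara}. The genuine labor that remains---and the paper spends most of its proof on it---is precisely the bookkeeping you flag as the main obstacle: tracking the commutator corrections through the three conjugations so that the final diagonal $c_{2,jj}$ contains the terms $\tfrac{i}{2}a_{12}'a_{21}$, $\tfrac14 a_{12}a_{21}(a_{11}-a_{22})$, and $\tfrac12(a_{12}b_{21}+a_{21}b_{12})$ that produce the exact combinations in \eqref{equation:condition05}--\eqref{equation:condition06}.
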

We essentially diagonalize our system \eqref{equation:pde} 
by an appropriate system of pseudodifferential operators, 
and the proof of Theorem~\ref{theorem:main} 
is reduced to Mizuhara's results on single equations of the form 
\begin{equation}
\frac{\partial v}{\partial t}
\pm
iD_x^4v
+
ia(x)D_x^3v
+
ib(x)D_x^2v
+
ic(x)D_xv
+
id(x)v
=
g(t,x)
\quad
\text{in}
\quad 
\mathbb{R}\times\mathbb{T},
\label{equation:single}
\end{equation}
where $v(t,x)$ is a complex-valued unknown function, 
$a(x)$, 
$b(x)$, 
$c(x)$, 
$d(x)$ 
and 
$g(t,x)$ 
are given functions. 
In \cite{mizuhara} he proved the following. 
\begin{theorem}[Mizuhara, \cite{mizuhara}]
\label{theorem:mizuhara} 
The initial value problem for \eqref{equation:single} is $L^2$-well-posed if and only if 
\begin{equation}
\operatorname{Im} 
\int_0^{2\pi}
a(x)
dx
=
0,
\label{equation:mizuhara1} 
\end{equation}
\begin{equation}
\operatorname{Im} 
\int_0^{2\pi}
\left\{
b(x)
\mp
\frac{3}{8}
a(x)^2
\right\}
dx
=
0,
\label{equation:mizuhara2} 
\end{equation}
\begin{equation}
\operatorname{Im} 
\int_0^{2\pi}
\left\{
c(x)
\mp
\frac{a(x)b(x)}{2}
\mp
\frac{a(x)^3}{8}
\right\}
dx
=
0.
\label{equation:mizuhara3} 
\end{equation}
Here we used the double-sign corresponds. 
\end{theorem}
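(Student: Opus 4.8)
The plan is to reduce the scalar equation \eqref{equation:single} to a constant-coefficient normal form by a finite chain of gauge (conjugation) transforms, and then read everything off the Fourier side. Split each coefficient into its real and imaginary parts. The operator $\pm iD_x^4$ is skew-adjoint on $L^2(\mathbb{T})$, and each of $ia(x)D_x^3$, $ib(x)D_x^2$, $ic(x)D_x$ is skew-adjoint modulo an operator of one order lower \emph{precisely when} its coefficient is real; thus the imaginary parts of $a,b,c$ are the only obstructions to an $L^2$ energy estimate. On the torus a function can be removed from the coefficient of $D_x^{4-j}$ by a pseudodifferential gauge transform of order $-(j-1)$ if and only if it has zero mean, because the transport/cohomological equation one must solve admits a periodic solution exactly under that condition. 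This single mechanism produces all three integral conditions.

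\emph{Sufficiency.} Assume \eqref{equation:mizuhara1}--\eqref{equation:mizuhara3}. Conjugate \eqref{equation:single} successively: first by a multiplication operator $e^{\nu_1(x)}$ so that the coefficient of $D_x^3$ becomes the constant $\mu_3:=\frac{1}{2\pi}\int_0^{2\pi}a\,dx$ modulo order $\le 2$ (possible since $a-\mu_3$ has zero mean); then by a $\Psi$DO of order $-1$ so that the new coefficient of $D_x^2$ becomes the constant $\mu_2$ equal to its mean modulo order $\le 1$; then by a $\Psi$DO of order $-2$ so that the coefficient of $D_x$ becomes the constant $\mu_1$ equal to its mean modulo order $\le 0$. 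A direct symbol computation shows that, after the earlier conjugations, the coefficient of $D_x^2$ is $b\mp\frac38 a^2$ and that of $D_x$ is $c\mp\frac{ab}{2}\mp\frac{a^3}{8}$ up to terms whose contribution to $\operatorname{Im}\int_0^{2\pi}(\cdot)\,dx$ vanishes once $\mu_3$ (then $\mu_2$) is known to be real (exact $x$-derivatives, and polynomials in the already-fixed constants). Hence \eqref{equation:mizuhara1} gives $\mu_3\in\mathbb{R}$, and then \eqref{equation:mizuhara2}, \eqref{equation:mizuhara3} give $\mu_2,\mu_1\in\mathbb{R}$ (the double sign matching that in \eqref{equation:single}). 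All conjugating operators have order $\le 0$, so they are bounded on $L^2(\mathbb{T})$ with bounded inverses. After conjugation the equation reads $\partial_t w=-i\mathcal{A}w-\mathcal{B}w+\tilde g$ with $\mathcal{A}=\pm D_x^4+\mu_3 D_x^3+\mu_2 D_x^2+\mu_1 D_x$ self-adjoint and $\mathcal{B}$ bounded; hence $-i\mathcal{A}-\mathcal{B}$ generates a $C_0$-group on $L^2(\mathbb{T})$, and Duhamel's formula yields a unique $w\in C(\mathbb{R};L^2(\mathbb{T}))$ for data in $L^2$ and forcing in $L^1_{\mathrm{loc}}$, in both time directions. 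Undoing the gauges gives $L^2$-well-posedness of \eqref{equation:single}.

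\emph{Necessity.} Suppose \eqref{equation:single} is $L^2$-well-posed. By the closed graph theorem there is, for each $T>0$, a constant $C_T$ with $\sup_{|t|\le T}\|v(t)\|_{L^2}\le C_T\bigl(\|v(0)\|_{L^2}+\|Lv\|_{L^1([-T,T];L^2)}\bigr)$ for smooth $v$. Construct a WKB approximate solution $v_k(t,x)=e^{ikx-i(\pm k^4)t}\tilde A_k(t,x)$, $k\in\mathbb{Z}$, $|k|\to\infty$, with $\tilde A_k(0,\cdot)\equiv 1$ and $\tilde A_k$ determined by the amplitude transport equations along the bicharacteristic $x\mapsto x-(\pm 4k^3)t$; matching amplitude orders down to $k^0$ makes $Lv_k$ of size $O(k^{-1})\|v_k\|_{L^2}$. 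Integrating the amplification around the torus, the growth turns out to be $\|v_k(T)\|_{L^2}\asymp\exp\bigl((\operatorname{Im}\mu_3\,k^3+\operatorname{Im}\mu_2\,k^2+\operatorname{Im}\mu_1\,k)\,T\bigr)\|v_k(0)\|_{L^2}$, where $\mu_3=\frac{1}{2\pi}\int_0^{2\pi}a\,dx$, and (once $\mu_3\in\mathbb{R}$) $\mu_2=\frac{1}{2\pi}\int_0^{2\pi}(b\mp\frac38 a^2)\,dx$, and (once also $\mu_2\in\mathbb{R}$) $\mu_1=\frac{1}{2\pi}\int_0^{2\pi}(c\mp\frac{ab}{2}\mp\frac{a^3}{8})\,dx$. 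If any of $\operatorname{Im}\mu_3,\operatorname{Im}\mu_2,\operatorname{Im}\mu_1$ is nonzero, choose $k\to\pm\infty$ and the sign of $t$ so the exponent tends to $+\infty$; comparing the exact solution with data $v_k(0)$ against $v_k$ (their difference is controlled by $\|Lv_k\|_{L^1L^2}=O(k^{-1})\sup\|v_k\|$) contradicts the a priori bound. Hence $\operatorname{Im}\mu_3=\operatorname{Im}\mu_2=\operatorname{Im}\mu_1=0$, which, in this order, is exactly \eqref{equation:mizuhara1}, \eqref{equation:mizuhara2}, \eqref{equation:mizuhara3}.

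\emph{Main obstacle.} The scheme---gauge to a normal form with skew-adjoint principal part, then semigroup theory for sufficiency and growing quasimodes for necessity---is standard, and the single place where something can go wrong is the symbolic bookkeeping: one must verify that the cascade of conjugations (respectively the amplitude expansion) regenerates \emph{precisely} the combinations $b\mp\frac38 a^2$ and $c\mp\frac{ab}{2}\mp\frac{a^3}{8}$ under $\operatorname{Im}\int_0^{2\pi}(\cdot)\,dx$, with all derivative terms and all contributions of the already-fixed real constants $\mu_3,\mu_2$ dropping out. One must also check that the order $-1$ and order $-2$ gauges exist as honest symbols (with cutoffs near $\xi=0$ affecting only finitely many modes), and push the expansions far enough that the remainder is of order $\le 0$, so that it neither spoils the $L^2$ energy estimate nor cancels the polynomial-in-$k$ growth.
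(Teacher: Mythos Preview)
The paper does not prove Theorem~\ref{theorem:mizuhara}; it is quoted from Mizuhara~\cite{mizuhara} and invoked as a black box in Step~4 of the proof of Theorem~\ref{theorem:main}, after the system has been diagonalized. There is thus no proof in this paper against which to compare your proposal.

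Your outline is nonetheless the standard one, and it mirrors what the present paper does for the \emph{system} in Section~\ref{section:proof1}: conjugate by a cascade of order-zero pseudodifferential operators to reduce the variable coefficients to constants (their means), so that the spatial part becomes skew-adjoint plus bounded precisely when those means are real; then energy/semigroup methods give sufficiency, and WKB quasimodes along the periodic bicharacteristics give necessity. You correctly flag the only substantive step, namely the symbol computation showing that the successive conjugations regenerate exactly $b\mp\tfrac38 a^2$ and $c\mp\tfrac{ab}{2}\mp\tfrac{a^3}{8}$ under $\operatorname{Im}\int_0^{2\pi}(\cdot)\,dx$; your proposal asserts this but does not carry it out, so as written it is a correct plan rather than a proof. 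Two minor points if you do flesh it out: (i) your first gauge $e^{\nu_1}$ must have complex $\nu_1$ in general (since $\nu_1'=(a-\mu_3)/(4i)$), which is harmless for boundedness but means the lower-order corrections it spawns are genuinely complex and must be tracked into the next step; (ii) in the necessity argument, the amplitude $\tilde A_k$ is explicit once the transport equations are solved, so $\lVert v_k(t)\rVert_{L^2}$ and $\lVert Lv_k\rVert_{L^1L^2}$ can be computed directly rather than bounded in terms of each other, which is how the comparison with the exact solution actually closes.
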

If we consider the system for ${}^t[w_1+iw_2,w_1-iw_2]$ instead of $\vec{w}$, 
Theorem~\ref{theorem:main} implies 
the necessary and sufficient conditions of $L^2$-well-posedness of 
\eqref{equation:pde2}-\eqref{equation:data2}. 
\begin{theorem}
\label{theorem:main2}
The initial value problem \eqref{equation:pde2}-\eqref{equation:data2} 
is $L^2$-well-posed if and only if 
\begin{align}
  \operatorname{Im} 
  \int_0^{2\pi}
  \operatorname{tr}\bigl(\beta(x)\bigr)
  dx
& =
  0,
\label{equation:condition11}
\\
  \operatorname{Im} 
  \int_0^{2\pi}
  \operatorname{tr}\bigl(J\gamma(x)\bigr)
  dx
& =
  0, 
\label{equation:condition12} 
\end{align}
where 
$\operatorname{tr}\bigl(\beta(x)\bigr)=\beta_{11}(x)+\beta_{22}(x)$ 
and 
$\operatorname{tr}\bigl(J\gamma(x)\bigr)=\gamma_{12}(x)-\gamma_{21}(x)$.  
\end{theorem}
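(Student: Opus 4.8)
The plan is to reduce Theorem~\ref{theorem:main2} to Theorem~\ref{theorem:main} via the linear change of unknown functions indicated just before its statement. Put
$$
T=\begin{bmatrix} 1 & i \\ 1 & -i \end{bmatrix},
\qquad
T^{-1}=\frac{1}{2}\begin{bmatrix} 1 & 1 \\ -i & i \end{bmatrix},
$$
and set $\vec{u}=T\vec{w}={}^t[\,w_1+iw_2,\ w_1-iw_2\,]$. Since $T$ is a fixed invertible matrix, $\vec{w}\mapsto T\vec{w}$ is a bounded isomorphism of $L^2(\mathbb{T};\mathbb{C}^2)$ that also maps $C\bigl(\mathbb{R};L^2(\mathbb{T};\mathbb{C}^2)\bigr)$ and $L^1_{\mathrm{loc}}\bigl(\mathbb{R};L^2(\mathbb{T};\mathbb{C}^2)\bigr)$ onto themselves; hence the $L^2$-well-posedness of \eqref{equation:pde2}-\eqref{equation:data2} is equivalent to that of the transformed problem $T\mathscr{L}T^{-1}\vec{u}=T\vec{h}$, $\vec{u}(0,\cdot)=T\vec{w}_0$. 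Because $\beta(x)$ and $\gamma(x)$ are real-valued, $\mathscr{L}$ commutes with complex conjugation, so solvability for $\mathbb{C}^2$-valued data is equivalent to solvability for $\mathbb{R}^2$-valued data together with the same energy bounds; I would record this remark so that both implications of Theorem~\ref{theorem:main2} follow from the ``if and only if'' in Theorem~\ref{theorem:main}.

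I would then bring $T\mathscr{L}T^{-1}$ into the normal form of \eqref{equation:pde}. Using $\partial/\partial x=iD_x$, so that $\partial^2/\partial x^2=-D_x^2$ and $\partial^4/\partial x^4=D_x^4$, we have $\mathscr{L}=I\,\partial_t+JD_x^4-\beta(x)D_x^2+i\gamma(x)D_x$. A direct computation gives the key identity $TJT^{-1}=iE$, whence
$$
T\mathscr{L}T^{-1}
=I\frac{\partial}{\partial t}
+i\bigl\{\,E D_x^4+i\,T\beta(x)T^{-1}D_x^2+T\gamma(x)T^{-1}D_x\,\bigr\}.
$$
Thus the transformed operator is exactly of the form $L$ in \eqref{equation:pde} with $A(x)\equiv0$, $D(x)\equiv0$, $B(x)=i\,T\beta(x)T^{-1}$ and $C(x)=T\gamma(x)T^{-1}$, whose entries lie in $C^\infty(\mathbb{T})$ since those of $\beta$ and $\gamma$ do.

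It remains to feed these coefficients into Theorem~\ref{theorem:main}. Since $A\equiv0$, conditions \eqref{equation:condition01}-\eqref{equation:condition02} are vacuous, and every term in \eqref{equation:condition03}-\eqref{equation:condition06} carrying a factor $a_{jk}$, $a_{jk}^2$, $a_{jk}^3$ or $a_{jk}^\prime$ drops out, so those four conditions collapse to
$$
\operatorname{Im}\int_0^{2\pi} b_{11}(x)\,dx
=\operatorname{Im}\int_0^{2\pi} b_{22}(x)\,dx
=\operatorname{Im}\int_0^{2\pi} c_{11}(x)\,dx
=\operatorname{Im}\int_0^{2\pi} c_{22}(x)\,dx=0.
$$
A short matrix multiplication gives
$b_{11}=\tfrac{i}{2}\operatorname{tr}(\beta)-\tfrac12(\beta_{21}-\beta_{12})$,
$b_{22}=\tfrac{i}{2}\operatorname{tr}(\beta)-\tfrac12(\beta_{12}-\beta_{21})$,
$c_{11}=\tfrac12(\gamma_{11}+\gamma_{22})-\tfrac{i}{2}(\gamma_{12}-\gamma_{21})$
and
$c_{22}=\tfrac12(\gamma_{11}+\gamma_{22})+\tfrac{i}{2}(\gamma_{12}-\gamma_{21})$;
as $\beta_{jk}$ and $\gamma_{jk}$ are real, taking imaginary parts turns the four displayed conditions into the single pair $\int_0^{2\pi}\operatorname{tr}(\beta(x))\,dx=0$ and $\int_0^{2\pi}\operatorname{tr}(J\gamma(x))\,dx=0$, i.e.\ \eqref{equation:condition11}-\eqref{equation:condition12}. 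Beyond this, the argument is pure bookkeeping once Theorem~\ref{theorem:main} is granted; the only two places that deserve a moment's care are the identity $TJT^{-1}=iE$, which is what forces the transformed principal part to be $ED_x^4$, and the equivalence between $\mathbb{R}^2$- and $\mathbb{C}^2$-valued well-posedness mentioned above, which is what lets the ``only if'' direction pass through the complexified problem.
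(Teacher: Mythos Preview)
Your proof is correct and follows essentially the same route as the paper's: the same conjugating matrix $T$ (called $M$ there), the same identity $TJT^{-1}=iE$, the same identification $A\equiv0$, $B=i\,T\beta T^{-1}$, $C=T\gamma T^{-1}$, and the same collapse of conditions \eqref{equation:condition01}--\eqref{equation:condition06} to \eqref{equation:condition11}--\eqref{equation:condition12}. Your explicit remark about the equivalence of $\mathbb{R}^2$- and $\mathbb{C}^2$-valued well-posedness is a useful clarification that the paper leaves implicit; note also that the paper supplements this reduction with a second, direct proof of sufficiency via a gauge transform, intended for later application to \eqref{equation:onoderasystem}, but that is not needed for Theorem~\ref{theorem:main2} itself.
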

We will check that Theorem~\ref{theorem:main} implies Theorem~\ref{theorem:main2}, 
and prove the sufficiency in Theorem~\ref{theorem:main2} directly 
with the applications to \eqref{equation:onoderasystem} in mind. 
Our direct proof of the sufficiency in Theorem~\ref{theorem:main2} works 
also in the case that the coefficients $\beta_{jk}$ and $\gamma_{jk}$ 
are $C^\infty(\mathbb{T})$-valued $C^1$-functions in time. 
\par
Finally we introduce a moving frame along the curve $u(t,\cdot)$ on $TN$, 
and consider $\nabla_x^lu_x$, which is $l$-th order derivative of $u$ in $x$ 
for some large integer $l$. 
The system \eqref{equation:corrected} 
for the two components of $\nabla_x^lu_x$ in the moving frame satisfies  
a fourth-order dispersive system like \eqref{equation:pde2}. 
We see that this system satisfies the conditions 
\eqref{equation:condition11} and \eqref{equation:condition12} 
in some sense provided that the sectional curvature of the Riemann surface 
$(N,\tilde{J},g)$ is constant. 
Such geometric reductions originated from the pioneering work of 
Chang, Shatah and Uhlenbeck in \cite{CSU}. 
They constructed a moving frame along solutions 
to the one-dimensional Schr\"odinger map equation 
$u_t=\tilde{J}(u)\nabla_xu_x$ into compact Riemann surfaces, 
and obtained a complex-valued equation from the equation for $u_x$. 
Being inspired with \cite{CSU}, Onodera studied the reduction of 
third and fourth-order one-dimensional dispersive flows in \cite{onodera1}. 
Unfortunately, it is not easy to understand the structure of the modified equations for $u_x$ 
from the point of view of linear partial differential equations. 
In other words, it is hard to distinguish an unknown from coefficients 
since both of them consist of the original unknown function $u$. 
In the present paper we obtain the system 
for the higher order spatial derivative of $u$ instead of $u_x$. 
We believe that our reduction is more comprehensive 
to understand the relationship between the structure of the equation and the geometric settings.   
\par
The plan of the present paper is as follows. 
In Section~\ref{section:proof1} we shall prove Theorem~\ref{theorem:main}. 
In Section~\ref{section:proof2} we shall prove Theorem~\ref{theorem:main2}.   
Finally, in  Section~\ref{section:frame} we shall introduce the moving frame,  
and study the system for higher order spatial derivatives of $u$.   
\section{Proof of Theorem~\ref{theorem:main}}
\label{section:proof1}
In this section we prove Theorem~\ref{theorem:main}. 
We modify the system \eqref{equation:pde} by using bounded pseudodifferential operators. 
Then, the system \eqref{equation:pde} becomes a pair of single equations essentially, 
and the proof of Theorem~\ref{theorem:main} is reduced to Theorem~\ref{theorem:mizuhara}. 
We make use of elementary pseudodifferential calculus on $\mathbb{R}$. 
See, e.g., \cite{kumano-go}, \cite{NR} and \cite{taylor} for this.   
\par
Let $m\in\mathbb{R}$. 
We denote by $S^m(\mathbb{T})$ the set of all smooth functions 
$q(x,\xi)$ of $(x,\xi)\in\mathbb{T}\times\mathbb{R}$ 
with the following properties: 
for all nonnegative integers $\alpha$ and $\beta$, 
there exists a positive constant $C_{\alpha\beta}$ such that 
$$
\left\lvert
\frac{\partial^{\alpha+\beta} q}{\partial x^\beta \partial \xi^\alpha}(x,\xi)
\right\rvert
\leqslant
C_{\alpha\beta}
\bigl(1+\lvert\xi\rvert\bigr)^{m-\alpha}
$$
holds for all $(x,\xi)\in\mathbb{T}\times\mathbb{R}$. 
Let $\mathscr{S}(\mathbb{R})$ be the Schwartz class of 
rapidly decreasing functions on $\mathbb{R}$. 
A linear operator $Q:\mathscr{S}(\mathbb{R}) \rightarrow \mathscr{S}(\mathbb{R})$ 
is said to be a pseudodifferential operator with a symbol $q$ if 
$$
Qu(x)
=
\frac{1}{2\pi}
\iint_{\mathbb{R}\times\mathbb{R}}
e^{i(x-y)\xi}
q(x,\xi)
u(y)
dyd\xi
$$
for $u\in\mathscr{S}(\mathbb{R})$.  
We remark that the operator $Q$ can be exteded as 
$Q: C^\infty(\mathbb{T}) \rightarrow C^\infty(\mathbb{T})$ 
since $q(x,\xi)$ is $2\pi$-periodic in $x$. 
Conversely, 
if a linear operator $Q: C^\infty(\mathbb{T}) \rightarrow C^\infty(\mathbb{T})$ is given, 
its symbol $\sigma(Q)$ is computed by 
$\sigma(Q)(x,\xi)=e^{-ix\xi}Qe^{ix\xi}$. 
In what follows we mainly deal with $2\times2$-matrix-valued symbols. 
The classes of such symbols are denoted by $S^m\bigl(\mathbb{T};M(2)\bigr)$.  
\begin{proof}[{\bf Proof of Theorem~\ref{theorem:main}}]
We split the proof into several steps in order to make our pseudodifferential calculus 
simple and comprehensive. 
We mainly use pseudodifferential operators of negative orders $-1$, $-2$ and $-3$. 
We apply these operators to \eqref{equation:pde} step by step. 
Let $r$ be a sufficiently large positive number. 
Pick up a smooth function $\varphi_r(\xi)$ on $\mathbb{R}$ such that 
$0\leqslant\varphi_r(\xi)\leqslant1$, 
$\varphi_r(\xi)=1$ for $\lvert\xi\rvert\geqslant r+1$, 
$\varphi_r(\xi)=0$ for $\lvert\xi\rvert\leqslant r$ 
and 
$\varphi_r(\xi)=\varphi(-\xi)$. 
The last property $\varphi_r(\xi)=\varphi(-\xi)$ is not necessary in this section, 
but will be essentially used in the next section. 
For $2\times2$ matrices, we use the following notation:
$$
A^{\text{diag}}(x)
=
\begin{bmatrix}
a_{11}(x) & 0
\\
0 & a_{22}(x) 
\end{bmatrix}, 
\quad
A^{\text{off}}(x)
=
\begin{bmatrix}
0 & a_{12}(x)
\\
a_{21}(x) & 0
\end{bmatrix} 
$$
for 
$$
A(x)
=
\begin{bmatrix}
a_{11}(x) & a_{12}(x)
\\
a_{21}(x) & a_{22}(x) 
\end{bmatrix}.  
$$
The set of all bounded linear operators on $L^2(\mathbb{T};\mathbb{C}^2)$ 
is denoted by $\mathscr{L}\bigl(L^2(\mathbb{T};\mathbb{C}^2)\bigr)$, 
and its norm is denoted by $\lVert\cdot\rVert$.  
\vspace{11pt}
\\
{\bf Step 1}: 
diagonalization of $iA(x)D_x^3$. 
Note that $EA^{\text{off}}(x)+A^{\text{off}}(x)E=0$. 
We define a pseudodifferential operator $\tilde{\Lambda}_1$ of order $-1$ by 
$$
\sigma\bigl(\tilde{\Lambda}_2\bigr)(x,\xi)
=
\frac{1}{2}
E
A^{\text{off}}(x)
\frac{\varphi_r(\xi)}{\xi}
=
-
\frac{1}{2}
A^{\text{off}}(x)
E
\frac{\varphi_r(\xi)}{\xi}. 
$$
Set $\Lambda_1=I+\tilde{\Lambda}_1$. 
Then 
$\sigma\bigl(\tilde{\Lambda}_1\bigr)(x,\xi) \in S^{-1}\bigl(\mathbb{T};M(2)\bigr)$ 
and  
$\sigma\bigl(\Lambda_1\bigr)(x,\xi) \in S^{0}\bigl(\mathbb{T};M(2)\bigr)$. 
Note that $\lVert\tilde{\Lambda}_1\rVert=\mathcal{O}(1/r)$. 
If we take a sufficiently large $r>0$, 
then $\Lambda_1$ is invertible on $\mathscr{L}\bigl(L^2(\mathbb{T};\mathbb{C}^2)\bigr)$, 
and the inverse is given by a Neumann series 
\begin{equation}
\Lambda_1^{-1}
=
I
+
\sum_{l=1}^\infty
\bigl\{-\tilde{\Lambda}_1\bigr\}^l
=
I
-
\tilde{\Lambda}_1
+
\tilde{\Lambda}_1^2
-
\tilde{\Lambda}_1^3
+
\tilde{\Lambda}_1^4\Lambda_1^{-1}. 
\label{equation:inverse1}
\end{equation}
Set $L_1=\Lambda_1L\Lambda_1^{-1}$ and $P_1=\Lambda_1P\Lambda_1^{-1}$ for short. 
Then we have 
$$
\Lambda_1L
=
L_1\Lambda_1
=
\left\{
I\frac{\partial}{\partial t}+iP_1
\right\}\Lambda_1. 
$$
We compute $P_1=\Lambda_1P\Lambda_1^{-1}$. 
Using \eqref{equation:inverse1}, we have 
\begin{align}
  P_1
& =
  \bigl(I+\tilde{\Lambda}_1\bigr)
  P
  \bigl(
  I
  -
  \tilde{\Lambda}_1
  +
  \tilde{\Lambda}_1^2
  -
  \tilde{\Lambda}_1^3
  +
  \tilde{\Lambda}_1^4\Lambda_1^{-1}
  \bigr)
\nonumber
\\
& \equiv
  P
  +
  \bigl(-P\tilde{\Lambda}_1+\tilde{\Lambda}_1P\bigr)
  -
  \bigl(-P\tilde{\Lambda}_1+\tilde{\Lambda}_1P\bigr)\tilde{\Lambda}_1
  +
  \bigl(-P\tilde{\Lambda}_1+\tilde{\Lambda}_1P\bigr)\tilde{\Lambda}_1^2
\label{equation:p1}
\end{align}
modulo $\mathscr{L}\bigl(L^2(\mathbb{T};\mathbb{C}^2)\bigr)$.
We compute the common term $-P\tilde{\Lambda}_1+\tilde{\Lambda}_1P$. 
Since $\tilde{\Lambda}_1$ is a pseudodifferential operator of order $-1$, 
we deduce that 
\begin{align}
  -P\tilde{\Lambda}_1+\tilde{\Lambda}_1P
  \equiv
& -ED_x^4\tilde{\Lambda}_1+\tilde{\Lambda}_1ED_x^4
\label{equation:part201}
\\
& -A(x)D_x^3\tilde{\Lambda}_1+\tilde{\Lambda}_1A(x)D_x^3
\label{equation:part202}
\\
& -B(x)D_x^2\tilde{\Lambda}_1+\tilde{\Lambda}_1B(x)D_x^2
  \qquad\text{mod}\quad
  \mathscr{L}\bigl(L^2(\mathbb{T};\mathbb{C}^2)\bigr).  
\label{equation:part203}   
\end{align} 
We compute the above term by term. 
Here we recall the definition of $\tilde{\Lambda}_1$, and the identities  
$E^2=I$ and $A^{\text{off}}(x)E+EA^{\text{off}}(x)=0$. 
We deduce that  
\begin{align*}
  \sigma\bigl(\text{\eqref{equation:part201}}\bigr)(x,\xi)
& \equiv
  \sum_{k=0}^2
  \frac{(-i)^k}{k!}
  \biggl[
  \left\{
  -
  \frac{\partial^k}{\partial \xi^k}
  E
  \xi^4
  \right\}
  \left\{
  \frac{\partial^k}{\partial x^k}
  \frac{1}{2}
  E
  A^{\text{off}}(x)
  \frac{\varphi_r(\xi)}{\xi}
  \right\}
\\
& \qquad\qquad\quad
  +
  \left\{
  -
  \frac{\partial^k}{\partial \xi^k}
  \frac{1}{2}
  A^{\text{off}}(x)
  E
  \frac{\varphi_r(\xi)}{\xi}
  \right\}
  \left\{
  \frac{\partial^k}{\partial x^k}
  E
  \xi^4
  \right\}
  \biggr]
\\
& =
  -\frac{1}{2}
  \sum_{k=0}^2
  \frac{(-i)^k}{k!}
  \biggl\{
  \frac{\partial^k A^{\text{off}}}{\partial x^k}(x)
  \frac{\varphi_r(\xi)}{\xi}
  \left(\frac{\partial^k}{\partial \xi^k}\xi^4\right)
\\
& \qquad\qquad\qquad\qquad
  +
  A^{\text{off}}(x)
  \left(\frac{\partial^k}{\partial \xi^k}\frac{\varphi_r(\xi)}{\xi}\right)
  \left(\frac{\partial^k}{\partial x^k}\xi^4\right)
  \biggr\}
\\
& \equiv
  -A^{\text{off}}(x)\xi^3
  +
  2i\frac{\partial A^{\text{off}}}{\partial x}(x)\xi^2
  +
  3\frac{\partial^2 A^{\text{off}}}{\partial x^2}(x)\xi,
\\
  \sigma\bigl(\text{\eqref{equation:part202}}\bigr)(x,\xi)
& \equiv
  \sum_{k=0}^1
  \frac{(-i)^k}{k!}
  \biggl[
  \left\{
  -\frac{\partial^k}{\partial \xi^k}
  A(x)\xi^3
  \right\}
  \left\{
  \frac{\partial^k}{\partial x^k}
  \frac{1}{2}
  E
  A^{\text{off}}(x)
  \frac{\varphi_r(\xi)}{\xi}
  \right\}
\\
& \qquad\qquad\quad
  +
  \left\{
  -
  \frac{\partial^k}{\partial \xi^k}
  \frac{1}{2}
  A^{\text{off}}(x)
  E
  \frac{\varphi_r(\xi)}{\xi}
  \right\}
  \left\{
  \frac{\partial^k}{\partial x^k}
  A(x)\xi^3
  \right\}
  \biggr]
\\
& \equiv
  \left\{
  -
  \frac{1}{2}
  A(x)EA^{\text{off}}(x)
  -
  \frac{1}{2}
  A^{\text{off}}(x)EA(x)  
  \right\}
  \xi^2
\\
& +
  \left\{
  \frac{3i}{2}
  A(x)E\frac{\partial A^{\text{off}}}{\partial x}(x)
  -
  \frac{i}{2}
  A^{\text{off}}(x)E\frac{\partial A}{\partial x}(x)
  \right\}
  \xi,
\\
  \sigma\bigl(\text{\eqref{equation:part203}}\bigr)(x,\xi)
& \equiv
  \left\{
  -
  \frac{1}{2}
  B(x)EA^{\text{off}}(x)
  -
  \frac{1}{2}
  A^{\text{off}}(x)EB(x)
  \right\}
  \xi,
\end{align*}
modulo $S^0\bigl(\mathbb{T};M(2)\bigr)$. 
Combining the above, we obtain 
\begin{align}
& \sigma\bigl(-P\tilde{\Lambda}_1+\tilde{\Lambda}_1P\bigr)(x,\xi)
\nonumber
\\
& \equiv 
  -A^{\text{off}}(x)\xi^3
\nonumber
\\
& +
  \left\{
  2i\frac{\partial A^{\text{off}}}{\partial x}(x)
  -
  \frac{1}{2}
  A(x)EA^{\text{off}}(x)
  -
  \frac{1}{2}
  A^{\text{off}}(x)EA(x)  
  \right\}
  \xi^2
\nonumber
\\ 
& +
  \biggl\{
  3\frac{\partial^2 A^{\text{off}}}{\partial x^2}(x)
  +
  \frac{3i}{2}
  A(x)E\frac{\partial A^{\text{off}}}{\partial x}(x)
  -
  \frac{i}{2}
  A^{\text{off}}(x)E\frac{\partial A}{\partial x}(x)
\nonumber
\\
& \qquad\qquad
  -
  \frac{1}{2}
  B(x)EA^{\text{off}}(x)
  -
  \frac{1}{2}
  A^{\text{off}}(x)EB(x)  
  \biggr\}
  \xi,
\label{equation:part204}
\end{align}
modulo $S^0\bigl(\mathbb{T};M(2)\bigr)$. 
By using \eqref{equation:part204}, we deduce that 
\begin{align}
& \sigma\Bigl(-\bigl(-P\tilde{\Lambda}_1+\tilde{\Lambda}_1P\bigr)\tilde{\Lambda}_1\Bigr)(x,\xi)
\nonumber
\\
& \equiv
  \sum_{k=0}^1
  \frac{(-i)^k}{k!}
  \left\{
  -
  \frac{\partial^k}{\partial \xi^k}
  \sigma\bigl(-P\tilde{\Lambda}_1+\tilde{\Lambda}_1P\bigr)(x,\xi)
  \right\}
  \left\{
  \frac{\partial^k}{\partial x^k}
  \frac{1}{2}EA^{\text{off}}(x)
  \frac{\varphi_r(\xi)}{\xi}
  \right\}
\nonumber 
\\
& \equiv
  \frac{1}{2}
  A^{\text{off}}(x)EA^{\text{off}}(x)
  \xi^2
\nonumber
\\
& +
  \biggl\{
  -
  \frac{3i}{2}
  A^{\text{off}}(x)E\frac{\partial A^{\text{off}}}{\partial x}(x)
  -
  i
  \frac{\partial A^{\text{off}}}{\partial x}(x)EA^{\text{off}}(x)
\nonumber
\\
& \qquad
  \frac{1}{4}
  A(x)EA^{\text{off}}(x)EA^{\text{off}}(x)
  +
  \frac{1}{4}
  A^{\text{off}}(x)EA(x)EA^{\text{off}}(x)
  \biggr\}
  \xi,
\label{equation:part205}
\\
& \sigma\Bigl(\bigl(-P\tilde{\Lambda}_1+\tilde{\Lambda}_1P\bigr)\tilde{\Lambda}_1^2\Bigr)(x,\xi)
\nonumber
\\
& \equiv
  \sigma\bigl(-P\tilde{\Lambda}_1+\tilde{\Lambda}_1P\bigr)(x,\xi)
  \left\{
  \frac{1}{2}EA^{\text{off}}(x)
  \frac{\varphi_r(\xi)}{\xi}
  \right\}^2
\nonumber
\\
& \equiv
  -
  \frac{1}{4}
  A^{\text{off}}(x)EA^{\text{off}}(x)EA^{\text{off}}(x)
  \xi, 
\label{equation:part206}
\end{align}
modulo $S^0\bigl(\mathbb{T};M(2)\bigr)$. 
Substituting 
\eqref{equation:part204}, 
\eqref{equation:part205} 
and  
\eqref{equation:part206} 
into 
\eqref{equation:p1}, 
we obtain 
\begin{equation}
P_1
\equiv
ED_x^4
+
A^{\text{diag}}(x)D_x^3
+
B_1(x)D_x^2
+
C_1(x)D_x
\qquad\text{mod}\quad 
\mathscr{L}\bigl(L^2(\mathbb{T};\mathbb{C}^2)\bigr), 
\label{equation:P1} 
\end{equation}
\begin{align}
  B_1(x)
& =
  B(x)
  +
  2i
  \frac{\partial A^{\text{off}}}{\partial x}(x)
\nonumber
\\
& +
  \frac{1}{2}
  A^{\text{off}}(x)EA^{\text{off}}(x)
  -
  \frac{1}{2}
  A(x)EA^{\text{off}}(x)
  -
  \frac{1}{2}
  A^{\text{off}}(x)EA(x),
\label{equation:B1}
\\
  C_1(x)
& =
  C(x)
  +
  3
  \frac{\partial^2 A^{\text{off}}}{\partial x^2}(x)
\nonumber
\\
& +
  \frac{3i}{2}
  A(x)E\frac{\partial A^{\text{off}}}{\partial x}(x)
  -
  \frac{i}{2}
  A^{\text{off}}(x)E\frac{\partial A}{\partial x}(x)
\nonumber
\\
& -
  \frac{3i}{2}
  A^{\text{off}}(x)E\frac{\partial A^{\text{off}}}{\partial x}(x)
  -
  i
  \frac{\partial A^{\text{off}}}{\partial x}(x)EA^{\text{off}}(x)
\nonumber
\\
& +
  \frac{1}{4}
  A(x)EA^{\text{off}}(x)EA^{\text{off}}(x)
  +
  \frac{1}{4}
  A^{\text{off}}(x)EA(x)EA^{\text{off}}(x)
\nonumber
\\
& -
  \frac{1}{4}
  A^{\text{off}}(x)EA^{\text{off}}(x)EA^{\text{off}}(x)
\nonumber
\\
& -
  \frac{1}{2}
  B(x)EA^{\text{off}}(x)
  -
  \frac{1}{2}
  A^{\text{off}}(x)EB(x).
\label{equation:C1}
\end{align}
\vspace{11pt}
\\
{\bf Step 2}: 
diagonalization of $iB_1(x)D_x^2$. 
Note that $EB_1^{\text{off}}(x)+B_1^{\text{off}}(x)E=0$. 
We define a pseudodifferential operator $\tilde{\Lambda}_2$ of order $-2$ by 
$$
\sigma\bigl(\tilde{\Lambda}_2\bigr)(x,\xi)
=
\frac{1}{2}
E
B_1^{\text{off}}(x)
\frac{\varphi_r(\xi)}{\xi^2}
=
-
\frac{1}{2}
B_1^{\text{off}}(x)
E
\frac{\varphi_r(\xi)}{\xi^2}. 
$$
Set $\Lambda_2=I+\tilde{\Lambda}_2$. 
Then 
$\sigma\bigl(\tilde{\Lambda}_2\bigr)(x,\xi) \in S^{-2}\bigl(\mathbb{T};M(2)\bigr)$ 
and 
$\sigma\bigl(\Lambda_2\bigr)(x,\xi) \in S^{0}\bigl(\mathbb{T};M(2)\bigr)$. 
Note that $\lVert\tilde{\Lambda}_2\rVert=\mathcal{O}(1/r^2)$. 
In the same way as Step 1, 
$\Lambda_2$ is invertible on $\mathscr{L}\bigl(L^2(\mathbb{T};\mathbb{C}^2)\bigr)$, 
and the inverse is given by 
$\Lambda_2^{-1}=I-\tilde{\Lambda}_2+\tilde{\Lambda}_2^2\Lambda_2^{-1}$. 
Set $L_2=\Lambda_2L_1\Lambda_2^{-1}$ and $P_2=\Lambda_2P_1\Lambda_2^{-1}$ for short. 
Then we have 
$$
\Lambda_2L_1
=
L_2\Lambda_2
=
\left\{
I\frac{\partial}{\partial t}+iP_2
\right\}\Lambda_2. 
$$
We compute $P_2=\Lambda_2P_1\Lambda_2^{-1}=\Lambda_2\Lambda_1P\Lambda_1^{-1}\Lambda_2^{-1}$. 
By using $\Lambda_2^{-1}=I-\tilde{\Lambda}_2+\tilde{\Lambda}_2^2\Lambda_2^{-1}$, 
we have 
\begin{align}
  P_2
& =
  \bigl(I+\tilde{\Lambda}_2\bigr)
  P_1
  \bigl(
  I
  -
  \tilde{\Lambda}_2
  +
  \tilde{\Lambda}_2^2\Lambda_2^{-1}
  \bigr)
\nonumber
\\
& \equiv
  P_1
  -
  P_1\tilde{\Lambda}_2
  +
  \tilde{\Lambda}_2P_1 
\nonumber
\\
& \equiv
  P_1
  +
  \bigl(
  -
  ED_x^4\tilde{\Lambda}_2
  +
  \tilde{\Lambda}_2ED_x^4
  \bigr)
  +
  \bigl(
  -
  A^{\text{diag}}(x)D_x^3\tilde{\Lambda}_2
  +
  \tilde{\Lambda}_2A^{\text{diag}}(x)D_x^3
  \bigr) 
\label{equation:p2}
\end{align}
modulo $\mathscr{L}\bigl(L^2(\mathbb{T};\mathbb{C}^2)\bigr)$. 
Since $E^2=I$, $EB_1^{\text{off}}(x)+B_1^{\text{off}}(x)E=0$ and 
$\sigma\bigl(\tilde{\Lambda}_2\bigr)(x,\xi) \in S^{-2}\bigl(\mathbb{T};M(2)\bigr)$, 
the symbols of the second and third terms of the right hand side of \eqref{equation:p2} are  
\begin{align*}
& \sigma
  \bigl(
  -
  ED_x^4\tilde{\Lambda}_2
  +
  \tilde{\Lambda}_2ED_x^4
  \bigr)
  (x,\xi)
\\
& \equiv
  \sum_{k=0}^1
  \frac{(-i)^k}{k!}
  \biggl[
  \left\{
  -
  \frac{\partial^k}{\partial \xi^k}
  E
  \xi^4
  \right\}
  \left\{
  \frac{\partial^k}{\partial x^k}
  \frac{1}{2}
  E
  B_1^{\text{off}}(x)
  \frac{\varphi_r(\xi)}{\xi}
  \right\}
\\
& \qquad\qquad\quad
  +
  \left\{
  -
  \frac{\partial^k}{\partial \xi^k}
  \frac{1}{2}
  B_1^{\text{off}}(x)
  E
  \frac{\varphi_r(\xi)}{\xi}
  \right\}
  \left\{
  \frac{\partial^k}{\partial x^k}
  E
  \xi^4
  \right\}
  \biggr]
\\
& \equiv
  -
  B_1^{\text{off}}(x)
  \xi^2
  +
  2i
  \frac{\partial B_1^{\text{off}}}{\partial x}(x)
  \xi, 
\\
& \sigma
  \bigl(
  -
  A^{\text{diag}}(x)D_x^3\tilde{\Lambda}_2
  +
  \tilde{\Lambda}_2A^{\text{diag}}(x)D_x^3
  \bigr)
  (x,\xi)
\\
& \equiv
  \left\{
  -
  \frac{1}{2}
  A^{\text{diag}}(x)EB^{\text{off}}(x)
  -
  \frac{1}{2}
  B^{\text{off}}(x)EA^{\text{diag}}(x)
  \right\}
  \xi,
\end{align*}
modulo $S^0\bigl(\mathbb{T};M(2)\bigr)$. 
Substituting these into \eqref{equation:p2}, we obtain 
\begin{equation}
P_2
\equiv
E
D_x^4
+
A^{\text{diag}}(x)
D_x^3
+
B_1^{\text{diag}}(x)
D_x^2
+
C_2(x)
D_x
\qquad\text{mod}\quad
\mathscr{L}\bigl(L^2(\mathbb{T};\mathbb{C}^2)\bigr),
\label{equation:P2}
\end{equation}
\begin{equation}
C_2(x)
=
C_1(x)
+
2i
\frac{\partial B_1^{\text{off}}}{\partial x}(x)
-
\frac{1}{2}
A^{\text{diag}}(x)EB^{\text{off}}(x)
-
\frac{1}{2}
B^{\text{off}}(x)EA^{\text{diag}}(x).
\label{equation:C2} 
\end{equation}
\vspace{11pt}
\\
{\bf Step 3}: 
diagonalization of $iC_2(x)D_x$. 
Note that $EC_2^{\text{off}}(x)+C_2^{\text{off}}(x)E=0$. 
We define a pseudodifferential operator $\tilde{\Lambda}_3$ of order $-3$ by 
$$
\sigma\bigl(\tilde{\Lambda}_3\bigr)(x,\xi)
=
\frac{1}{2}
E
C_2^{\text{off}}(x)
\frac{\varphi_r(\xi)}{\xi^3}
=
-
\frac{1}{2}
C_2^{\text{off}}(x)
E
\frac{\varphi_r(\xi)}{\xi^3}. 
$$
Set $\Lambda_3=I+\tilde{\Lambda}_3$. 
Then 
$\sigma\bigl(\tilde{\Lambda}_3\bigr)(x,\xi) \in S^{-3}\bigl(\mathbb{T};M(2)\bigr)$ 
and 
$\sigma\bigl(\Lambda_3\bigr)(x,\xi) \in S^{0}\bigl(\mathbb{T};M(2)\bigr)$. 
Note that $\lVert\tilde{\Lambda}_3\rVert=\mathcal{O}(1/r^3)$. 
In the same way as Step 1, 
$\Lambda_3$ is invertible on $\mathscr{L}\bigl(L^2(\mathbb{T};\mathbb{C}^2)\bigr)$, 
and the inverse is given by 
$\Lambda_3^{-1}=I-\tilde{\Lambda}_3+\tilde{\Lambda}_3^2\Lambda_3^{-1}$. 
Set $L_3=\Lambda_3L_2\Lambda_3^{-1}$ and $P_3=\Lambda_3P_2\Lambda_3^{-1}$ for short. 
Then we have 
$$
\Lambda_3L_2
=
L_3\Lambda_3
=
\left\{
I\frac{\partial}{\partial t}+iP_2
\right\}\Lambda_3. 
$$
We compute 
$P_3=\Lambda_3P_2\Lambda_3^{-1}=\Lambda_3\Lambda_2\Lambda_1P\Lambda_1^{-1}\Lambda_2^{-1}\Lambda_3^{-1}$. 
By using $\Lambda_3^{-1}=I-\tilde{\Lambda}_3+\tilde{\Lambda}_3^2\Lambda_3^{-1}$, 
we have 
\begin{align}
  P_3
& =
  \bigl(I+\tilde{\Lambda}_3\bigr)
  P_2
  \bigl(
  I
  -
  \tilde{\Lambda}_3
  +
  \tilde{\Lambda}_3^2\Lambda_3^{-1}
  \bigr)
\nonumber
\\
& \equiv
  P_2
  -
  P_2\tilde{\Lambda}_3
  +
  \tilde{\Lambda}_3P_2 
\nonumber
\\
& \equiv
  P_2
  +
  \bigl(
  -
  ED_x^4\tilde{\Lambda}_3
  +
  \tilde{\Lambda}_3ED_x^4
  \bigr)
  \qquad\text{mod}\quad
  \mathscr{L}\bigl(L^2(\mathbb{T};\mathbb{C}^2)\bigr).
\label{equation:p3}
\end{align}
Since $E^2=I$, $EC_2^{\text{off}}(x)+C_2^{\text{off}}(x)E=0$ and 
$\sigma\bigl(\tilde{\Lambda}_3\bigr)(x,\xi) \in S^{-3}\bigl(\mathbb{T};M(2)\bigr)$, 
the symbol of the second term of the right hand side of \eqref{equation:p3} is   
\begin{align*}
  \sigma
  \bigl(
  -
  ED_x^4\tilde{\Lambda}_3
  +
  \tilde{\Lambda}_3ED_x^4
  \bigr)
  (x,\xi)
& \equiv
  -E\xi^4
  \cdot
  \frac{1}{2}
  E
  C_2^{\text{off}}(x)
  \frac{\varphi_r(\xi)}{\xi^3}
  -
  \frac{1}{2}
  C_2^{\text{off}}(x)
  \frac{\varphi_r(\xi)}{\xi^3}
  E
  \cdot
  E\xi^4
\\
& \equiv
  -C_2^{\text{off}}(x)\xi
  \qquad\text{mod}\quad
  S^0\bigl(\mathbb{T};M(2)\bigr). 
\end{align*}
Substituting this into \eqref{equation:p3}, we obtain 
\begin{equation}
P_3
\equiv
E
D_x^4
+
A^{\text{diag}}(x)
D_x^3
+
B_1^{\text{diag}}(x)
D_x^2
+
C_2^{\text{diag}}(x)
D_x
\qquad\text{mod}\quad
\mathscr{L}\bigl(L^2(\mathbb{T};\mathbb{C}^2)\bigr),
\label{equation:P3}
\end{equation}
This completes our diagonalization of $L$. 
\vspace{11pt}
\\
{\bf Step 4}. 
We shall complete the proof of Theorem~\ref{theorem:main}. 
Set $\Lambda=\Lambda_3\Lambda_2\Lambda_1$ for short. 
Since $\Lambda$ is an invertible bounded linear operators on 
$L^2(\mathbb{T};\mathbb{C}^2)$ 
and 
$\Lambda L \equiv L_3 \Lambda$ mod $\mathscr{L}\bigl(L^2(\mathbb{T};\mathbb{C}^2)\bigr)$, 
the $L^2$-well-posedness of 
\eqref{equation:pde}-\eqref{equation:data} 
is equivalent to that of the initial value problem of the form 
\begin{equation}
L_3\vec{u}=\vec{f}(t,x),
\quad
\vec{u}(0,x)=\vec{\phi}(x). 
\label{equation:L3}
\end{equation}
The system $L_3\vec{u}=\vec{f}(t,x)$ is exactly a pair of two single equations, 
and not a system essentially. 
Thus the $L^2$-well-posedness of \eqref{equation:L3} is reduced to 
Theorem~\ref{theorem:mizuhara}. 
It suffices to check the conditions 
\eqref{equation:mizuhara1}, 
\eqref{equation:mizuhara2} 
and 
\eqref{equation:mizuhara3}. 
By using 
\eqref{equation:P3}, 
\eqref{equation:B1}, 
\eqref{equation:C1} 
and 
\eqref{equation:C2}, 
we can obtain the concrete form of $L_3$. 
The first row of $L_3$ is 
$$
\frac{\partial}{\partial t}
+
iD_x^4
+
a_{11}(x)D_x^3
+
b_{1,11}(x)D_x^2
+
c_{2,11}(x)D_x,
$$
\begin{align*}
  b_{1,11}(x)
& =
  b_{11}(x)
  +
  \frac{1}{2}
  a_{12}(x)a_{21}(x),
\\
  c_{2,11}(x)
& =
  c_{11}(x)
  +
  \frac{i}{2}
  \bigl\{a_{12}(x)a_{21}(x)\bigr\}_x
  +
  \frac{i}{2}
  a_{12}^\prime(x)a_{21}(x)
\\
& -
  \frac{1}{4}
  a_{12}(x)a_{21}(x)
  \bigl\{a_{11}(x)-a_{22}(x)\bigr\}
\\
& +
  \frac{1}{2}
  \bigl\{a_{12}(x)b_{21}(x)+a_{21}(x)b_{12}(x)\bigr\}, 
\end{align*}
and the second row of $L_3$ is 
$$
\frac{\partial}{\partial t}
-
iD_x^4
+
a_{22}(x)D_x^3
+
b_{1,22}(x)D_x^2
+
c_{2,22}(x)D_x,
$$
\begin{align*}
  b_{1,22}(x)
& =
  b_{22}(x)
  -
  \frac{1}{2}
  a_{12}(x)a_{21}(x),
\\
  c_{2,22}(x)
& =
  c_{22}(x)
  -
  \frac{i}{2}
  \bigl\{a_{12}(x)a_{21}(x)\bigr\}_x
  +
  \frac{i}{2}
  a_{12}(x)a_{21}^\prime(x)
\\
& +
  \frac{1}{4}
  a_{12}(x)a_{21}(x)
  \bigl\{a_{11}(x)-a_{22}(x)\bigr\}
\\
& -
  \frac{1}{2}
  \bigl\{a_{12}(x)b_{21}(x)+a_{21}(x)b_{12}(x)\bigr\}.  
\end{align*}
Checking the conditions 
\eqref{equation:mizuhara1}, 
\eqref{equation:mizuhara2} 
and 
\eqref{equation:mizuhara3} 
for the above, 
we can obtain 
\eqref{equation:condition01}, 
\eqref{equation:condition02}, 
\eqref{equation:condition03}, 
\eqref{equation:condition04}, 
\eqref{equation:condition05} 
and  
\eqref{equation:condition06}. 
We omit the detail. 
\end{proof}
%
%
\section{Proof of Theorem~\ref{theorem:main2}} 
\label{section:proof2}
In this section we prove Theorem~\ref{theorem:main2}. 
Firstly we prove it by using Theorem~\ref{theorem:main}. 
Secondly we give the direct proof of its sufficiency of $L^2$-well-posedness. 
We believe that this will be helpful for studying the initial value problem for 
\eqref{equation:onoderasystem}. 
\begin{proof}[{\bf Proof of Theorem~\ref{theorem:main2}}]
Let $\vec{w}$ be a solution to \eqref{equation:pde2}-\eqref{equation:data2}. 
Let $M$ be a $2\times2$ matrix defined by 
$$
M
=
\begin{bmatrix}
1 & i
\\
1 & -i 
\end{bmatrix}. 
$$
Then 
$$
M^{-1}
=
\frac{1}{2}
\begin{bmatrix}
1 & 1
\\
-i & i 
\end{bmatrix}. 
$$
Set 
$\vec{U}=M\vec{w}$, 
$\vec{F}=M\vec{h}$, 
$\vec{U}_0=M\vec{w}_0$ 
and 
$\mathscr{L}_1=M\mathscr{L}M^{-1}$. 
Then we have 
\begin{equation}
\mathscr{L}_1\vec{U}=\vec{F},
\quad
\vec{U}(0,x)=\vec{U}_0(x).
\label{equation:301} 
\end{equation}
The $L^2$-well-posedness of \eqref{equation:pde2}-\eqref{equation:data2} 
is equivalent to that of the initial value problem for \eqref{equation:301}. 
We shall obtain the concrete form of $\mathscr{L}_1$. 
Simple computations give 
\begin{align*}
  \mathscr{L}_1
& =
  M
  \left\{
  I
  \frac{\partial}{\partial t}
  +
  J
  \frac{\partial^4}{\partial x^4}
  +
  \beta(x)
  \frac{\partial^2}{\partial x^2}
  +
  \gamma(x)
  \frac{\partial}{\partial x}
  \right\}
  M^{-1}
\\
& =
  M
  \left\{
  I
  \frac{\partial}{\partial t}
  +
  J
  D_x^4
  -
  \beta(x)
  D_x^2
  +
  i
  \gamma(x)
  D_x
  \right\}
  M^{-1}
\\
& =
  I
  \frac{\partial}{\partial t}
  +
  MJM^{-1}
  D_x^4
  -
  M\beta(x)M^{-1}
  D_x^2
  +
  i
  M\gamma(x)M^{-1}
  D_x
\\
& =
  I
  \frac{\partial}{\partial t}
  +
  i
  \bigl\{-iMJM^{-1}\bigr\}
  D_x^4
  +
  i
  \bigl\{iM\beta(x)M^{-1}\bigr\}
  D_x^2
  +
  i
  M\gamma(x)M^{-1}
  D_x
\\
& =
  I
  \frac{\partial}{\partial t}
  +
  i
  E
  D_x^4
  +
  i
  \tilde{B}(x)
  D_x^2
  +
  i
  \tilde{C}(x)
  D_x, 
\end{align*}
where
$$
\tilde{B}(x)
=
\frac{1}{2}
\begin{bmatrix}
\tilde{b}_{11}(x) & \tilde{b}_{12}(x)
\\
\tilde{b}_{21}(x) & \tilde{b}_{22}(x) 
\end{bmatrix},
\quad
\tilde{C}(x)
=
\frac{1}{2}
\begin{bmatrix}
\tilde{c}_{11}(x) & \tilde{c}_{12}(x)
\\
\tilde{c}_{21}(x) & \tilde{c}_{22}(x) 
\end{bmatrix},
$$
\begin{align*}
  \tilde{b}_{11}(x)
& =
  \bigl\{\beta_{12}(x)-\beta_{21}(x)\bigr\}+i\bigl\{\beta_{11}(x)+\beta_{22}(x)\bigr\},
\\
  \tilde{b}_{12}(x)
& =
   -\bigl\{\beta_{12}(x)+\beta_{21}(x)\bigr\}+i\bigl\{\beta_{11}(x)-\beta_{22}(x)\bigr\},
\\
  \tilde{b}_{21}(x)
& =
  \bigl\{\beta_{12}(x)+\beta_{21}(x)\bigr\}+i\bigl\{\beta_{11}(x)-\beta_{22}(x)\bigr\},
\\
  \tilde{b}_{22}(x)
& =
  -\bigl\{\beta_{12}(x)-\beta_{21}(x)\bigr\}+i\bigl\{\beta_{11}(x)+\beta_{22}(x)\bigr\},
\\
  \tilde{c}_{11}(x)
& =
  \bigl\{\gamma_{11}(x)+\gamma_{22}(x)\bigr\}-i\bigl\{\gamma_{12}(x)-\gamma_{21}(x)\bigr\},
\\
  \tilde{c}_{12}(x)
& =
  \bigl\{\gamma_{11}(x)-\gamma_{22}(x)\bigr\}+i\bigl\{\gamma_{12}(x)+\gamma_{21}(x)\bigr\},
\\
  \tilde{c}_{21}(x)
& =
  \bigl\{\gamma_{11}(x)-\gamma_{22}(x)\bigr\}-i\bigl\{\gamma_{12}(x)+\gamma_{21}(x)\bigr\},
\\
  \tilde{c}_{22}(x)
& =
  \bigl\{\gamma_{11}(x)+\gamma_{22}(x)\bigr\}+i\bigl\{\gamma_{12}(x)-\gamma_{21}(x)\bigr\}.
\end{align*}
Hence, Theorem~\ref{theorem:main} implies that 
the initial value problem for \eqref{equation:301} is $L^2$-well-posed 
if and only if 
both \eqref{equation:condition11} and \eqref{equation:condition12} hold. 
This completes the proof. 
\end{proof}
We give the direct proof of the sufficiency of Theorem~\ref{theorem:main2}. 
We  begin with studying Fourier multipliers mapping 
real-valued functions to real-valued functions. 
Recall the requirements of the smooth function $\varphi_r(\xi)$: 
$0\leqslant\varphi_r(\xi)\leqslant1$, 
$\varphi_r(\xi)=1$ 
for 
$\lvert\xi\rvert\geqslant r+1$, 
$\varphi_r(\xi)=0$ 
for 
$\lvert\xi\rvert\leqslant r$, 
and 
$\varphi_r(\xi)=\varphi(-\xi)$. 
Note that the last one $\varphi_r(\xi)=\varphi(-\xi)$ is crucial here.   
Let $l$ be a nonnegative integer. 
We consider a Fourier multiplier $p_l(D_x)$ 
whose symbol is $p_l(\xi)=\varphi_r(\xi)/(i\xi)^l$. 
We make use of the following properties of $p_l(D_x)$. 
\begin{lemma}
\label{theorem:lemma} 
Let $v\in\mathscr{S}(\mathbb{R})$. 
If $\operatorname{Im}v(x)=0$, then $\operatorname{Im}p_l(D_x)v(x)=0$. 
\end{lemma}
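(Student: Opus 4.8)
The plan is to reduce the statement to the elementary fact that a Fourier multiplier preserves the reality of functions precisely when its symbol enjoys the Hermitian symmetry $\overline{m(\xi)}=m(-\xi)$ for $\xi\in\mathbb{R}$. First I would recall that for $g\in\mathscr{S}(\mathbb{R})$ one has $\operatorname{Im}g(x)=0$ for all $x$ if and only if its Fourier transform satisfies $\widehat{g}(-\xi)=\overline{\widehat{g}(\xi)}$; this is standard and follows by taking complex conjugates in the Fourier inversion formula. Since $p_l(\xi)=\varphi_r(\xi)/(i\xi)^l$ vanishes for $\lvert\xi\rvert\leqslant r$, it is a smooth, rapidly decreasing function, so $p_l(D_x)$ genuinely acts on $\mathscr{S}(\mathbb{R})$ and $\widehat{p_l(D_x)v}(\xi)=p_l(\xi)\widehat{v}(\xi)$.

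The next step is the symmetry check for $p_l$. Because $\varphi_r$ is real-valued and even, and $\overline{(i\xi)^l}=(-i\xi)^l$ for $\xi\in\mathbb{R}$, one gets $\overline{p_l(\xi)}=\varphi_r(\xi)/(-i\xi)^l=\varphi_r(-\xi)/\bigl(i(-\xi)\bigr)^l=p_l(-\xi)$; here the hypothesis $\varphi_r(\xi)=\varphi_r(-\xi)$ is exactly what is used. Combining this with the reality of $v$, I then compute $\overline{\widehat{p_l(D_x)v}(\xi)}=\overline{p_l(\xi)}\,\overline{\widehat{v}(\xi)}=p_l(-\xi)\widehat{v}(-\xi)=\widehat{p_l(D_x)v}(-\xi)$, which is precisely the Hermitian symmetry equivalent to $\operatorname{Im}p_l(D_x)v(x)=0$.

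Alternatively, and perhaps more transparently, one can argue via the convolution kernel: writing $k_l(x)=(2\pi)^{-1}\int_{\mathbb{R}}e^{ix\xi}p_l(\xi)\,d\xi$, the change of variables $\xi\mapsto-\xi$ together with $\overline{p_l(\xi)}=p_l(-\xi)$ shows $\overline{k_l(x)}=k_l(x)$, so $p_l(D_x)v=k_l*v$ is real-valued whenever $v$ is. I do not expect any real obstacle here; the only points requiring a word of care are that $p_l$ is a well-defined multiplier on $\mathscr{S}(\mathbb{R})$ (guaranteed by the cutoff $\varphi_r$ away from $\xi=0$) and that it is the evenness of $\varphi_r$, rather than any other property, that makes the symmetry, and hence the lemma, work.
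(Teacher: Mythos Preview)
Your argument is correct and is essentially the same as the paper's: both hinge on the identity $\overline{p_l(\xi)}=p_l(-\xi)$, derived from the evenness and real-valuedness of $\varphi_r$, and both deduce reality of $p_l(D_x)v$ from it. The paper carries out the substitution $\xi\mapsto-\xi$ directly inside the oscillatory integral for $p_l(D_x)v(x)-\overline{p_l(D_x)v(x)}$, while you phrase the same computation as the standard Hermitian-symmetry criterion on the Fourier side; these are the same proof in slightly different clothing.

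One small inaccuracy worth fixing: $p_l$ is not rapidly decreasing (for $\lvert\xi\rvert\geqslant r+1$ it equals $(i\xi)^{-l}$, which decays only polynomially), so the inverse transform $k_l$ need not be a Schwartz function; for $l\in\{0,1\}$ it is only a tempered distribution. This does not affect your main argument, since $p_l$ is a smooth symbol of nonpositive order and $p_l(D_x)$ therefore maps $\mathscr{S}(\mathbb{R})$ to itself; it only means the alternative convolution-kernel argument should be read distributionally when $l<2$.
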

\begin{proof}[{\bf Proof}] 
Suppose that $v\in\mathscr{S}(\mathbb{R})$ and $\operatorname{Im}v(x)=0$. 
By using change of variable $\xi\mapsto\eta=-\xi$, we deduce that 
\begin{align*}
& p_l(D_x)v(x)-\overline{p_l(D_x)v(x)}
\\
  =
& \frac{1}{2\pi}
  \iint_{\mathbb{R}^2}
  e^{i(x-y)\xi}
  \frac{\varphi_r(\xi)}{(i\xi)^l}
  v(y)
  dyd\xi
  -
  \overline{
  \frac{1}{2\pi}
  \iint_{\mathbb{R}^2}
  e^{i(x-y)\xi}
  \frac{\varphi_r(\xi)}{(i\xi)^l}
  v(y)
  dyd\xi
  }
\\
  =
& \frac{1}{2\pi}
  \iint_{\mathbb{R}^2}
  e^{i(x-y)\xi}
  \frac{\varphi_r(\xi)}{(i\xi)^l}
  v(y)
  dyd\xi
  -
  \frac{1}{2\pi}
  \iint_{\mathbb{R}^2}
  e^{i(x-y)(-\xi)}
  \frac{\varphi_r(\xi)}{(-i\xi)^l}
  v(y)
  dyd\xi
\\
  =
& \frac{1}{2\pi}
  \iint_{\mathbb{R}^2}
  e^{i(x-y)\xi}
  \frac{\varphi_r(\xi)}{(i\xi)^l}
  v(y)
  dyd\xi
  -
  \frac{1}{2\pi}
  \iint_{\mathbb{R}^2}
  e^{i(x-y)\eta}
  \frac{\varphi_r(-\eta)}{(i\eta)^l}
  v(y)
  dyd\xi
\\
  =
& \frac{1}{2\pi}
  \iint_{\mathbb{R}^2}
  e^{i(x-y)\xi}
  \frac{\varphi_r(\xi)}{(i\xi)^l}
  v(y)
  dyd\xi
  -
  \frac{1}{2\pi}
  \iint_{\mathbb{R}^2}
  e^{i(x-y)\eta}
  \frac{\varphi_r(\eta)}{(i\eta)^l}
  v(y)
  dyd\xi
  =0.
\end{align*}
This completes the proof.  
\end{proof}
Finally we prove the sufficiency of 
both \eqref{equation:condition11} and \eqref{equation:condition12} directly. 
We denote by $L^2(\mathbb{T};\mathbb{R}^2)$ 
the set of all $\mathbb{R}^2$-valued square-integrable functions on $\mathbb{T}$, 
and by $C^\infty(\mathbb{T};\mathbb{R})$ 
the set of all real-valued smooth functions on $\mathbb{T}$.   
\begin{proof}[{\bf Direct Proof of the Sufficiency in Theorem~\ref{theorem:main2}}]
Suppose that both 
\eqref{equation:condition11} and \eqref{equation:condition12} hold. 
We introduce a gauge transform on 
$L^2(\mathbb{T};\mathbb{R}^2)$, and modify the operator $\mathscr{L}$ 
so that $\beta(x)$ and $\gamma(x)$ become 
skew-symmetric and symmetric respectively. 
For simpler computations, we construct the gauge transform 
by a product of three pseudodifferential operators of order zero. 
For this reason we split the proof into three steps. 
Here we explain more detail of this strategy. 
Let $X=[x_{jk}]$ be a $2\times2$ real matrix. 
We split $X$ into three parts: 
\begin{align*}
  X
& =
  \frac{\operatorname{tr}(X)}{2}
  I
  +
  \frac{1}{2}
  \{X+JXJ\}
  +
  \frac{1}{2}
  \{X-JXJ-\operatorname{tr}(X)I\}. 
\\
& =
  \frac{\operatorname{tr}(X)}{2}
  I
  +
  \frac{1}{2}
  \{JX-XJ\}J
  -
  \frac{\operatorname{tr}(JX)}{2}
  J
\\
& =
  \frac{x_{11}+x_{22}}{2}
  I
  +
  \frac{1}{2}
  \begin{bmatrix}
  x_{11}-x_{22} & x_{12}+x_{21} \\ x_{12}+x_{21} & -x_{11}+x_{22} 
  \end{bmatrix}
  +
  \frac{-x_{12}+x_{21}}{2}
  J.  
\end{align*}
Note that both 
$(\operatorname{tr}(X))I/2$ 
and 
$-operatorname{tr}(JX)J/2$ 
commute with $J$, and that 
\begin{align*}
  JX-XJ
& =
  J
  \cdot
  \frac{1}{2}
  \{X+JXJ\}
  -
  \frac{1}{2}
  \{X+JXJ\}
  \cdot
  J,
\\ 
  \frac{1}{2}
  \{X+{}^tX\}
& =
  \frac{\operatorname{tr}(X)}{2}
  I
  +
  \frac{1}{2}
  \{JX-XJ\}J,
\\
  \frac{1}{2}
  \{X-{}^tX\}
& =
  \frac{1}{2}
  \{X-JXJ-\operatorname{tr}(X)I\}
  =
  -
  \frac{\operatorname{tr}(JX)}{2}
  J. 
\end{align*}
We eliminate 
$$
\frac{1}{2}
\{\beta(x)+{}^t\beta(x)\}
=
\frac{\operatorname{tr}\bigl(\beta(x)\bigr)}{2}
I
+
\frac{1}{2}
\bigl\{J\beta(x)-\beta(x)J\bigr\}
J
$$
from $\mathscr{L}$ in first two steps. 
In the third step we eliminate 
$$
\frac{1}{2}
\bigl\{\gamma(x)-{}^t\gamma(x)J\bigr\}
=
-
\frac{\operatorname{tr}\bigl(J\gamma(x)\bigr)}{2}
J
$$
from $\mathscr{L}$. 
\vspace{11pt}\\
{\bf Step 1}: 
We eliminate $\operatorname{tr}\bigl(\beta(x)\bigr)I/2$. 
Set 
$$
\Psi_4(x)
=
\int_0^x
\operatorname{tr}\bigl(\beta(y)\bigr)
dy. 
$$
We deduce that $\Psi_4 \in C^\infty(\mathbb{T};\mathbb{R})$ 
since $\beta_{jk}$ is a real-valued smooth functions on $\mathbb{T}$ 
and \eqref{equation:condition11}. 
Set 
$$
\sigma\bigl(\tilde{\Lambda}_4\bigr)(x,\xi)
=
\frac{1}{8}
\Psi_4(x)
\frac{\varphi_r(\xi)}{i\xi}
J,
\quad
\Lambda_4=I-\tilde{\Lambda}_4. 
$$
It follows that 
$\sigma\bigl(\tilde{\Lambda}_4\bigr)(x,\xi) \in S^{-1}\bigl(\mathbb{T};M(2)\bigr)$, 
$\sigma\bigl(\Lambda_4\bigr)(x,\xi) \in S^{0}\bigl(\mathbb{T};M(2)\bigr)$ 
and  
$\lVert\tilde{\Lambda}_4\rVert=\mathcal{O}(1/r)$.  
If we take a sufficiently large $r>0$, 
then $\Lambda_4$ is invertible on $\mathscr{L}\bigl(L^2(\mathbb{T};\mathbb{R}^2)\bigr)$, 
and the inverse satisfies 
$$
\Lambda_4^{-1}
=
I
+
\tilde{\Lambda}_4
+
\tilde{\Lambda}_4^2
+
\tilde{\Lambda}_4^3
+
\tilde{\Lambda}_4^4\Lambda_4^{-1}. 
$$
Set $\mathscr{L}_4=\Lambda_4\mathscr{L}\Lambda_4^{-1}$. 
Then $\Lambda_4\mathscr{L}=\mathscr{L}_4\Lambda_4$. 
We compute $\mathscr{L}_4$ in detail. 
Since $\tilde{\Lambda}_4$ is a pseudodifferential operator of order $-1$, 
we deduce that 
\begin{align}
  \mathscr{L}_4
& =
  \Lambda_4
  \left\{
  I
  \frac{\partial}{\partial t}
  +
  J
  \frac{\partial^4}{\partial x^4}
  +
  \beta(x)
  \frac{\partial^2}{\partial x^2}
  +
  \gamma(x)
  \frac{\partial}{\partial x}
  \right\}
  \Lambda_4^{-1}
\nonumber
\\
& \equiv
  I
  \frac{\partial}{\partial t}
  +
  \gamma(x)
  \frac{\partial}{\partial x}
\nonumber
\\
& +
  \bigl(
  I
  -
  \tilde{\Lambda}_4
  \bigr)
  \left\{
  J
  \frac{\partial^4}{\partial x^4}
  +
  \beta(x)
  \frac{\partial^2}{\partial x^2}
  \right\}
  \bigl(
  I
  +
  \tilde{\Lambda}_4
  +
  \tilde{\Lambda}_4^2
  +
  \tilde{\Lambda}_4^3
  +
  \tilde{\Lambda}_4^4\Lambda_4^{-1}
  \bigr)
\nonumber
\\
& \equiv
  I
  \frac{\partial}{\partial t}
  +
  J
  \frac{\partial^4}{\partial x^4}
  +
  \beta(x)
  \frac{\partial^2}{\partial x^2}
  +
  \gamma(x)
  \frac{\partial}{\partial x}
\nonumber
\\
& +
  \left(
  J
  \frac{\partial^4}{\partial x^4}
  \tilde{\Lambda}_4
  -
  \tilde{\Lambda}_4
  J
  \frac{\partial^4}{\partial x^4}
  \right)
  \bigl(
  I
  +
  \tilde{\Lambda}_4
  +
  \tilde{\Lambda}_4^2
  \bigr)
  +
  \left(
  \beta(x)
  \frac{\partial^2}{\partial x^2}
  \tilde{\Lambda}_4
  -
  \tilde{\Lambda}_4
  \beta(x)
  \frac{\partial^2}{\partial x^2}
  \right)
\label{equation:l4}
\end{align}
modulo $\mathscr{L}\bigl(L^2(\mathbb{T};\mathbb{R}^2)\bigr)$. 
We see the last two terms in detail. 
Since $J^2=-I$, we deduce that 
\begin{align}
& \sigma
  \left(
  J
  \frac{\partial^4}{\partial x^4}
  \tilde{\Lambda}_4
  -
  \tilde{\Lambda}_4
  J
  \frac{\partial^4}{\partial x^4}
  \right)
  (x,\xi)
\nonumber
\\
  \equiv
& \sum_{k=0}^2
  \frac{(-i)^k}{k!}
  \biggl[
  \left\{
  \frac{\partial^k}{\partial \xi^k}
  J(i\xi)^4
  \right\}
  \left\{
  \frac{\partial^k}{\partial x^k}
  \frac{1}{8}
  \Psi_4(x)
  \frac{\varphi_r(\xi)}{i\xi}
  J
  \right\}
\nonumber
\\
& \qquad\qquad\qquad
  -
  \left\{
  \frac{\partial^k}{\partial \xi^k}
  \frac{1}{8}
  \Psi_4(x)
  \frac{\varphi_r(\xi)}{i\xi}
  J
  \right\}
  \left\{
  \frac{\partial^k}{\partial x^k}
  J(i\xi)^4
  \right\}
  \biggr]
\nonumber
\\
  =
& \sum_{k=0}^2
  \frac{(-i)^k}{k!}
  \biggl[
  -
  \left\{
  \frac{\partial^k}{\partial \xi^k}
  (i\xi)^4
  \right\}
  \left\{
  \frac{\partial^k}{\partial x^k}
  \frac{1}{8}
  \Psi_4(x)
  \frac{\varphi_r(\xi)}{i\xi}
  \right\}
\nonumber
\\
& \qquad\qquad\qquad
  +
  \left\{
  \frac{\partial^k}{\partial \xi^k}
  \frac{1}{8}
  \Psi_4(x)
  \frac{\varphi_r(\xi)}{i\xi}
  \right\}
  \left\{
  \frac{\partial^k}{\partial x^k}
  (i\xi)^4
  \right\}
  \biggr]
  I
\nonumber
\\
  =
& -
  \sum_{k=1}^2
  \frac{(-i)^k}{k!}
  \left\{
  \frac{\partial^k}{\partial \xi^k}
  (i\xi)^4
  \right\}
  \left\{
  \frac{\partial^k}{\partial x^k}
  \frac{1}{8}
  \Psi_4(x)
  \frac{\varphi_r(\xi)}{i\xi}
  \right\}
  I
\nonumber
\\
  =
& -
  \frac{1}{2}
  \Psi_4^\prime(x)
  (i\xi)^2
  I
  -
  \frac{3}{4}
  \Psi_4^{\prime\prime}(x)
  (i\xi)
  I,
\label{equation:351}
\\
& \sigma
  \left(
  \left\{
  J
  \frac{\partial^4}{\partial x^4}
  \tilde{\Lambda}_4
  -
  \tilde{\Lambda}_4
  J
  \frac{\partial^4}{\partial x^4}
  \right\}
  \bigl(
  \tilde{\Lambda}_4
  +
  \tilde{\Lambda}_4^2
  \bigr)
  \right)
  (x,\xi)
\nonumber
\\
  \equiv
& \left\{
  -
  \frac{1}{2}
  \Psi_4^\prime(x)
  (i\xi)^2
  \right\}
  \left\{
  \frac{1}{8}
  \Psi_4(x)
  \frac{\varphi_r(\xi)}{i\xi}
  J
  \right\}
\nonumber
\\
  =
& -
  \frac{1}{32}
  \bigl\{\Psi_4(x)^2\bigr\}_x
  (i\xi)
  J,
\label{equation:352}
\\
& \sigma
  \left(
  \beta(x)
  \frac{\partial^2}{\partial x^2}
  \tilde{\Lambda}_4
  -
  \tilde{\Lambda}_4
  \beta(x)
  \frac{\partial^2}{\partial x^2}
  \right)
  (x,\xi)
\nonumber
\\
  \equiv
& \beta(x)
  (i\xi)^2
  \cdot
  \frac{1}{8}
  \Psi_4(x)
  \frac{\varphi_r(\xi)}{i\xi}
  J
  -
  \frac{1}{8}
  \Psi_4(x)
  \frac{\varphi_r(\xi)}{i\xi}
  J
  \cdot
  \beta(x)
  (i\xi)^2
\nonumber
\\
  \equiv
& \frac{1}{8}
  \Psi_4(x)
  \bigl\{\beta(x)J-J\beta(x)\bigr\}
  (i\xi),
\label{equation:353}
\end{align}
modulo $ S^0(\mathbb{T};M(2))$. 
Substituting 
\eqref{equation:351}, 
\eqref{equation:352} 
and 
\eqref{equation:353} 
into 
\eqref{equation:l4}, 
we obtain 
\begin{equation}
\mathscr{L}_4
\equiv
I
\frac{\partial}{\partial t}
+
J
\frac{\partial^4}{\partial x^4}
+
\beta_4(x)
\frac{\partial^2}{\partial x^2}
+
\gamma_4(x)
\frac{\partial}{\partial x}
\qquad\text{mod}\quad
\mathscr{L}\bigl(L^2(\mathbb{T};\mathbb{R}^2)\bigr),
\label{equation:L4} 
\end{equation}
\begin{equation}
\beta_4(x)
=
\beta(x)
-
\frac{1}{2}
\Psi_4^\prime(x)
I
=
\frac{1}{2}
\begin{bmatrix}
\beta_{11}(x)-\beta_{22}(x) & 2\beta_{12}(x)
\\
2\beta_{21}(x) & -\beta_{11}(x)+\beta_{22}(x)
\end{bmatrix},
\label{equation:beta4}
\end{equation} 
\begin{equation}
\gamma_4(x)
=
\gamma(x)
-
\frac{3}{4}
\Psi_4^{\prime\prime}(x)
I
-
\frac{1}{32}
\bigl\{\Psi_4(x)^2\bigr\}_x
J
+
\frac{1}{8}
\Psi_4(x)
\bigl\{\beta(x)J-J\beta(x)\bigr\}.
\label{equation:gamma4}
\end{equation}
\vspace{11pt}\\
{\bf Step 2}: 
We eliminate $\bigl\{J\beta(x)-\beta(x)J\bigr\}J/2$. 
Set
$$
\sigma\bigl(\tilde{\Lambda}_5\bigr)(x,\xi)
=
\frac{1}{2}
\beta_4(x)
J
p_2(\xi)
=
\frac{1}{2}
\beta_4(x)
J
\frac{\varphi_r(\xi)}{(i\xi)^2}, 
\quad
\Lambda_5=I+\tilde{\Lambda}_5.  
$$
It follows that 
$\sigma\bigl(\tilde{\Lambda}_5\bigr)(x,\xi) \in S^{-2}\bigl(\mathbb{T};M(2)\bigr)$, 
$\sigma\bigl(\Lambda_5\bigr)(x,\xi) \in S^{0}\bigl(\mathbb{T};M(2)\bigr)$ 
and  
$\lVert\tilde{\Lambda}_5\rVert=\mathcal{O}(1/r^2)$.  
If we take a sufficiently large $r>0$, 
then $\Lambda_5$ is invertible on $\mathscr{L}\bigl(L^2(\mathbb{T};\mathbb{R}^2)\bigr)$, 
and the inverse satisfies 
$\Lambda_5^{-1}=I-\tilde{\Lambda}_5+\tilde{\Lambda}_5^2\Lambda_5^{-1}$. 
Set 
$\mathscr{L}_5=\Lambda_5\mathscr{L}_4\Lambda_5^{-1}=\Lambda_5\Lambda_4\mathscr{L}\Lambda_4^{-1}\Lambda_5^{-1}$. 
Then $\Lambda_5\mathscr{L}_4=\mathscr{L}_5\Lambda_5$. 
We compute $\mathscr{L}_5$ in detail. 
Since $\tilde{\Lambda}_5$ is a pseudodifferential operator of order $-2$, 
we deduce that 
\begin{align}
  \mathscr{L}_5
& =
  \Lambda_5
  \left\{
  I
  \frac{\partial}{\partial t}
  +
  J
  \frac{\partial^4}{\partial x^4}
  +
  \beta_4(x)
  \frac{\partial^2}{\partial x^2}
  +
  \gamma_4(x)
  \frac{\partial}{\partial x}
  \right\}
  \Lambda_5^{-1}
\nonumber
\\
& \equiv
  I
  \frac{\partial}{\partial t}
  +
  \beta_4(x)
  \frac{\partial^2}{\partial x^2}
  +
  \gamma_4(x)
  \frac{\partial}{\partial x}
  +
  \bigl(
  I
  +
  \tilde{\Lambda}_5
  \bigr)
  J
  \frac{\partial^4}{\partial x^4}
  \bigl(
  I
  -
  \tilde{\Lambda}_5
  \bigr)
\nonumber
\\
& \equiv
  I
  \frac{\partial}{\partial t}
  +
  J
  \frac{\partial^4}{\partial x^4}
  +
  \beta_4(x)
  \frac{\partial^2}{\partial x^2}
  +
  \gamma_4(x)
  \frac{\partial}{\partial x}
  +
  \left(
  \tilde{\Lambda}_5
  J
  \frac{\partial^4}{\partial x^4}  
  -
  J
  \frac{\partial^4}{\partial x^4}
  \tilde{\Lambda}_5
  \right)
\label{equation:l5}
\end{align}
modulo $\mathscr{L}\bigl(L^2(\mathbb{T};\mathbb{R}^2)\bigr)$. 
We see the last term in detail. 
Since $\beta_4(x)J^2=-\beta(x)$ and $J\beta_4(x)J={}^t\beta_4(x)$, we deduce that 
\begin{align}
& \sigma
  \left(
  \tilde{\Lambda}_5
  J
  \frac{\partial^4}{\partial x^4}  
  -
  J
  \frac{\partial^4}{\partial x^4}
  \tilde{\Lambda}_5
  \right)
  (x,\xi)
\nonumber
\\
  \equiv
& \sum_{k=0}^1
  \frac{(-i)^k}{k!}
  \biggl[
  \left\{
  \frac{\partial^k}{\partial \xi^k}
  \frac{1}{2}
  \beta_4(x)J
  p_2(\xi)
  \right\}
  \left\{
  \frac{\partial^k}{\partial x^k}
  J(i\xi)^4
  \right\}
\nonumber
\\
& \qquad\qquad\qquad
  -
  \left\{
  \frac{\partial^k}{\partial \xi^k}
  J(i\xi)^4
  \right\}
  \left\{
  \frac{\partial^k}{\partial x^k}
  \frac{1}{2}
  \beta_4(x)J
  p_2(\xi)
  \right\}
  \biggr] 
\nonumber
\\
  =
& -
  \frac{1}{2}
  \sum_{k=0}^1
  \frac{(-i)^k}{k!}
  \biggl[
  \left\{
  \beta_4(x)
  \frac{\partial^k}{\partial \xi^k}
  \frac{\varphi_r(\xi)}{(i\xi)^2}
  \right\}
  \left\{
  \frac{\partial^k}{\partial x^k}
  (i\xi)^4
  \right\}
\nonumber
\\
& \qquad\qquad\qquad\quad
  +
  \left\{
  \frac{\partial^k}{\partial \xi^k}
  (i\xi)^4
  \right\}
  \left\{
  \frac{\partial^k}{\partial x^k}
  {}^t\beta_4(x)
  \frac{\varphi_r(\xi)}{(i\xi)^2}
  \right\}
  \biggr] 
\nonumber
\\
  \equiv
& -
  \frac{1}{2}
  \bigl\{\beta_4(x)+{}^t\beta_4(x)\bigr\}
  (i\xi)^2
  -
  2
  \frac{\partial {}^t\beta_4}{\partial x}(x)
  (i\xi)
\qquad\text{mod}\quad
S^0\bigl(\mathbb{T};M(2)\bigr). 
\label{equation:360}
\end{align}
Substitute \eqref{equation:360} into \eqref{equation:l5}. 
By using \eqref{equation:beta4} and \eqref{equation:gamma4}, we deduce 
\begin{align}
  \mathscr{L}_5
& \equiv
    I
  \frac{\partial}{\partial t}
  +
  J
  \frac{\partial^4}{\partial x^4}
  +
  \frac{1}{2}\{\beta_4(x)-{}^t\beta_4(x)\}
  \frac{\partial^2}{\partial x^2}
  +
  \left\{
  \gamma_4(x)
  -
  2
  \frac{\partial {}^t\beta_4}{\partial x}(x)
  \right\}
  \frac{\partial}{\partial x}
\nonumber
\\
& =
  I
  \frac{\partial}{\partial t}
  +
  J
  \frac{\partial^4}{\partial x^4}
  -
  \frac{\beta_{12}(x)-\beta_{21}(x)}{2}
  J
  \frac{\partial^2}{\partial x^2}
  +
  \left\{
  \gamma_4(x)
  -
  2
  \frac{\partial {}^t\beta_4}{\partial x}(x)
  \right\}
  \frac{\partial}{\partial x}
\nonumber
\\
& =
  I
  \frac{\partial}{\partial t}
  +
  J
  \frac{\partial^4}{\partial x^4}
  -
  \frac{\partial}{\partial x}
  \frac{\operatorname{tr}\bigl(J\beta(x)\bigr)}{2}
  J
  \frac{\partial}{\partial x}
  +
  \gamma_5(x)
  \frac{\partial}{\partial x}
\label{equation:L5}
\end{align}
modulo $\mathscr{L}\bigl(L^2(\mathbb{T};\mathbb{R}^2)\bigr)$, 
where 
\begin{align}
  \gamma_5(x)
& =
  \gamma_4(x)
  -
  2
  \frac{\partial {}^t\beta_4}{\partial x}(x)
  +
  \frac{\bigl\{\operatorname{tr}\bigl(J\beta(x)\bigr)\bigr\}_x}{2}
  J
\nonumber
\\
& =
  \gamma(x)
  +
  \frac{1}{8}
  \Psi_4(x)
  \bigl\{\beta(x)J-J\beta(x)\bigr\}
  -
  \frac{3}{4}
  \Psi_4^{\prime\prime}(x)
  I
\nonumber
\\
& -
  \frac{1}{32}
  \bigl\{\Psi_4(x)^2\bigr\}_x
  J
  -
  2
  \frac{\partial {}^t\beta_4}{\partial x}(x)
  +
  \frac{\bigl\{\operatorname{tr}\bigl(J\beta(x)\bigr)\bigr\}_x}{2}
  J
\label{equation:361}
\end{align}
\vspace{11pt}\\
{\bf Step 3}: Skew-symmetric part of $\gamma_5(x)$. 
Set 
$$
\gamma_5^{\text{sym}}(x)
=
\frac{1}{2}
\bigl\{\gamma_5(x)+{}^t\gamma_5(x)\bigr\},
\quad
\frac{1}{2}
\bigl\{\gamma_5(x)-{}^t\gamma_5(x)\bigr\}
=
\mu(x)J 
$$
for short. 
Then $\gamma_5(x)=\gamma_5^{\text{sym}}(x)+\mu(x)J$. 
Simple computations with \eqref{equation:beta4} yield 
$$
\beta(x)J-J\beta(x)
=
\begin{bmatrix}
\beta_{12}(x)+\beta_{21}(x) 
&
-\beta_{11}(x)+\beta_{22}(x)
\\
-\beta_{11}(x)+\beta_{22}(x)
& 
-\beta_{12}(x)-\beta_{21}(x)  
\end{bmatrix},
\quad
$$
$$
\frac{1}{2}
\left\{
-
2
\frac{\partial {}^t\beta_4}{\partial x}(x)
+
2
\frac{\partial \beta_4}{\partial x}(x)
\right\}
=
\bigl\{
\beta_4(x)-{}^t\beta_4(x)
\bigr\}_x
=
-
\bigl\{
\operatorname{tr}\bigl(J\beta(x)\bigr)
\bigr\}_x
J. 
$$
Substituting these into \eqref{equation:361}, we have 
$$
\mu(x)
=
-
\frac{\operatorname{tr}\bigl(J\gamma(x)\bigr)}{2}
-
\left\{
\frac{\operatorname{tr}\bigl(J\beta(x)\bigr)}{2}
+
\frac{\Psi_4(x)^2}{32}
\right\}_x. 
$$
Set 
$$
\Psi_6(x)
=
-
\frac{1}{2}
\int_0^x
\operatorname{tr}\bigl(J\gamma(y)\bigr)
dy
-
\frac{\operatorname{tr}\bigl(J\beta(x)\bigr)}{2}
-
\frac{\Psi_4(x)^2}{32}. 
$$
We deduce that 
$\Psi_6 \in C^\infty(\mathbb{T};\mathbb{R})$ since \eqref{equation:condition12}, 
and that $\Psi_6^\prime(x)=\mu(x)$. 
\par
Here we introduce a system of pseudodifferential operators 
$\Lambda_6=I+\tilde{\Lambda}_6$ defined by 
$$
\sigma\bigl(\tilde{\Lambda}_6\bigr)(x,\xi)
=
\frac{1}{4}
\Psi_6(x)
I
p_2(\xi)
=
\frac{1}{4}
\Psi_6(x)
I
\frac{\varphi_r(\xi)}{(i\xi)^2}. 
$$
It follows that 
$\sigma\bigl(\tilde{\Lambda}_6\bigr)(x,\xi) \in S^{-2}\bigl(\mathbb{T};M(2)\bigr)$, 
$\sigma\bigl(\Lambda_6\bigr)(x,\xi) \in S^{0}\bigl(\mathbb{T};M(2)\bigr)$ 
and  
$\lVert\tilde{\Lambda}_6\rVert=\mathcal{O}(1/r^2)$.  
If we take a sufficiently large $r>0$, 
then $\Lambda_6$ is invertible on $\mathscr{L}\bigl(L^2(\mathbb{T};\mathbb{R}^2)\bigr)$, 
and the inverse satisfies 
$\Lambda_6^{-1}=I-\tilde{\Lambda}_6+\tilde{\Lambda}_6^2\Lambda_6^{-1}$. 
Set 
$\mathscr{L}_6=\Lambda_6\mathscr{L}_5\Lambda_6^{-1}=\Lambda_6\Lambda_5\Lambda_4\mathscr{L}\Lambda_4^{-1}\Lambda_5^{-1}\Lambda_6^{-1}$. 
Then $\Lambda_6\mathscr{L}_5=\mathscr{L}_6\Lambda_6$. 
We compute $\mathscr{L}_6$ in detail. 
Since $\tilde{\Lambda}_6$ is a pseudodifferential operator of order $-2$, 
we deduce that 
\begin{align}
  \mathscr{L}_6
& =
  \Lambda_6
  \left\{
  I
  \frac{\partial}{\partial t}
  +
  J
  \frac{\partial^4}{\partial x^4}
  -
  \frac{\partial}{\partial x}
  \frac{\operatorname{tr}\bigl(J\beta(x)\bigr)}{2}
  J
  \frac{\partial}{\partial x}
  +
  \bigl\{
  \gamma_5^{\text{sym}}(x)
  +
  \mu(x)J
  \bigr\}
  \frac{\partial}{\partial x}
  \right\}
  \Lambda_6^{-1}
\nonumber
\\
& \equiv
  I
  \frac{\partial}{\partial t}
  -
  \frac{\partial}{\partial x}
  \frac{\operatorname{tr}\bigl(J\beta(x)\bigr)}{2}
  J
  \frac{\partial}{\partial x}
\nonumber
\\
& +
  \bigl\{
  \gamma_5^{\text{sym}}(x)
  +
  \mu(x)J
  \bigr\}
  \frac{\partial}{\partial x}
  +
  \bigl(
  I
  +
  \tilde{\Lambda}_6
  \bigr)
  J
  \frac{\partial^4}{\partial x^4}
  \bigl(
  I
  -
  \tilde{\Lambda}_6
  \bigr)
\nonumber
\\
& \equiv
  I
  \frac{\partial}{\partial t}
  +
  J
  \frac{\partial^4}{\partial x^4}
  -
  \frac{\partial}{\partial x}
  \frac{\operatorname{tr}\bigl(J\beta(x)\bigr)}{2}
  J
  \frac{\partial}{\partial x}
\nonumber
\\
& +
  \bigl\{
  \gamma_5^{\text{sym}}(x)
  +
  \mu(x)J
  \bigr\}
  \frac{\partial}{\partial x}
  +
  \left(
  \tilde{\Lambda}_6
  J
  \frac{\partial^4}{\partial x^4}  
  -
  J
  \frac{\partial^4}{\partial x^4}
  \tilde{\Lambda}_6
  \right).
\label{equation:l6}
\end{align}
modulo $\mathscr{L}\bigl(L^2(\mathbb{T};\mathbb{R}^2)\bigr)$. 
We see the last term in detail. 
By using $\Psi_6^\prime(x)=\mu(x)$, we deduce that 
\begin{align}
& \sigma
  \left(
  \tilde{\Lambda}_6
  J
  \frac{\partial^4}{\partial x^4}  
  -
  J
  \frac{\partial^4}{\partial x^4}
  \tilde{\Lambda}_6
  \right)
  (x,\xi)
\nonumber
\\
  \equiv
& \sum_{k=0}^1
  \frac{(-i)^k}{k!}
  \biggl[
  \left\{
  \frac{\partial^k}{\partial \xi^k}
  \frac{1}{4}
  \Psi_6(x)
  I
  p_2(\xi)
  \right\}
  \left\{
  \frac{\partial^k}{\partial x^k}
  J(i\xi)^4
  \right\}
\nonumber
\\
& \qquad\qquad\qquad
  -
  \left\{
  \frac{\partial^k}{\partial \xi^k}
  J(i\xi)^4
  \right\}
  \left\{
  \frac{\partial^k}{\partial x^k}
  \frac{1}{4}
  \Psi_6(x)
  I
  p_2(\xi)
  \right\}
  \biggr] 
\nonumber
\\
  =
& \frac{1}{4}
  \sum_{k=0}^1
  \frac{(-i)^k}{k!}
  \biggl[
  \left\{
  \frac{\partial^k}{\partial \xi^k}
  \Psi_6(x)
  p_2(\xi)
  \right\}
  \left\{
  \frac{\partial^k}{\partial x^k}
  (i\xi)^4
  \right\}
\nonumber
\\
& \qquad\qquad\qquad
  -
  \left\{
  \frac{\partial^k}{\partial \xi^k}
  (i\xi)^4
  \right\}
  \left\{
  \frac{\partial^k}{\partial x^k}
  \Psi_6(x)
  p_2(\xi)
  \right\}
  \biggr] 
  J
\nonumber
\\
& =
  -
  \frac{1}{4}
  (-i)
  \left\{
  \frac{\partial}{\partial \xi}
  (i\xi)^4
  \right\}
  \left\{
  \Psi_6^\prime(x)
  \frac{\varphi_r(\xi)}{(i\xi)^2}
  \right\}
  J
\nonumber
\\
& \equiv
  -
  \mu(x)(i\xi)J
  \qquad\text{mod}\quad
  S^0\bigl(\mathbb{T};M(2)\bigr).
\label{equation:370}
\end{align}
Substituting \eqref{equation:370} into \eqref{equation:l6}, we obtain 
\begin{equation}
\mathscr{L}_6
\equiv
I
\frac{\partial}{\partial t}
+
J
\frac{\partial^4}{\partial x^4}
-
\frac{\partial}{\partial x}
\frac{\operatorname{tr}\bigl(J\beta(x)\bigr)}{2}
J
\frac{\partial}{\partial x}
+
\gamma_5^{\text{sym}}(x)
\frac{\partial}{\partial x}
\label{equation:L6}
\end{equation}
modulo $\mathscr{L}\bigl(L^2(\mathbb{T};\mathbb{R}^2)\bigr)$. 
It is easy to check that the initial value problem for $\mathscr{L}_6$ is $L^2$-well-posed. 
Hence \eqref{equation:pde2}-\eqref{equation:data2} is also $L^2$-well-posed 
since $\Lambda_6\Lambda_5\Lambda_4$ is automorphic on $L^2(\mathbb{T};\mathbb{R}^2)$. 
We omit the detail. 
This completes the proof.  
\end{proof}
Finally, we remark that the sufficiency of Theorem~\ref{theorem:main2} holds also 
in case that all the coefficients depend on time variable $t\in\mathbb{R}$. 
To state this precisely, we here introduce some function spaces. 
We denote the set of all bounded continuous 
$C^\infty(\mathbb{T};\mathbb{R})$-valued functions on $\mathbb{R}$ 
by $C_b\bigl(\mathbb{R};C^\infty(\mathbb{T};\mathbb{R})\bigr)$. 
Set 
$$
C_b^1\bigl(\mathbb{R};C^\infty(\mathbb{T};\mathbb{R})\bigr)
=
\Bigl\{
a(t,x) \in C^1\bigl(\mathbb{R};C^\infty(\mathbb{T};\mathbb{R})\bigr)
\ \Big\vert \ 
a(t,x), a_t(t,x) 
\in 
C_b\bigl(\mathbb{R};C^\infty(\mathbb{T};\mathbb{R})\bigr)
\Bigr\}.
$$ 
Consider the initial value problem of the form 
\begin{alignat}{2}
  \mathscr{P}\vec{w}
& =
  \vec{h}(t,x) 
& \quad
& \text{in}
  \quad
  \mathbb{R}\times\mathbb{T},
\label{equation:pde3}      
\\
  \vec{w}(0,x)
& =
  \vec{w}_0(x)
& \quad
& \text{in}
  \quad
  \mathbb{T},
\label{equation:data3}
\end{alignat}
where 
$$
\mathscr{P}
=
I
\frac{\partial}{\partial t}
+
J
\frac{\partial^4}{\partial x^4}
+
\tilde{\beta}(t,x)
\frac{\partial^2}{\partial x^2}
+
\tilde{\gamma}(t,x)
\frac{\partial}{\partial x},
$$
and all the components in 
$\tilde{\beta}(t,x)=[\tilde{\beta}_{jk}(t,x)]_{j,k=1,2}$ 
and 
$\tilde{\gamma}(t,x)=[\tilde{\gamma}_{jk}(t,x)]_{j,k=1,2}$ 
are supposed to belong to 
$C_b^1\bigl(\mathbb{R};C^\infty(\mathbb{T};\mathbb{R})\bigr)$. 
In the same way as the direct proof of the sufficiency in Theorem~\ref{theorem:main2}, 
we can prove the following. 
\begin{theorem}
\label{theorem:main3} 
If we assume that 
\begin{align}
  \operatorname{Im} 
  \int_0^{2\pi}
  \operatorname{tr}\bigl(\tilde{\beta}(t,x)\bigr)
  dx
& =
  0,
\label{equation:condition21}
\\
  \operatorname{Im} 
  \int_0^{2\pi}
  \operatorname{tr}\bigl(J\tilde{\gamma}(t,x)\bigr)
  dx
& =
  0
\label{equation:condition22} 
\end{align}
for any $t\in\mathbb{R}$, 
then the initial value problem 
\eqref{equation:pde3}-\eqref{equation:data3} 
is $L^2$-well-posed. 
\end{theorem}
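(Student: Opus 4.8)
The plan is to repeat verbatim the three-step gauge construction carried out in the Direct Proof of the Sufficiency in Theorem~\ref{theorem:main2}, now treating the coefficients as functions of $(t,x)$ with the regularity $\tilde\beta_{jk},\tilde\gamma_{jk}\in C_b^1\bigl(\mathbb{R};C^\infty(\mathbb{T};\mathbb{R})\bigr)$. First I would set, for each fixed $t$,
$$
\Psi_4(t,x)=\int_0^x\operatorname{tr}\bigl(\tilde\beta(t,y)\bigr)\,dy,
$$
which by \eqref{equation:condition21} is $2\pi$-periodic in $x$ and, by the $C^1$-in-$t$ hypothesis on $\tilde\beta$, lies in $C_b^1\bigl(\mathbb{R};C^\infty(\mathbb{T};\mathbb{R})\bigr)$. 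Then define $\tilde\Lambda_4=\tilde\Lambda_4(t)$ by the same symbol $\sigma(\tilde\Lambda_4)(t,x,\xi)=\tfrac18\Psi_4(t,x)\varphi_r(\xi)/(i\xi)\,J$ and $\Lambda_4=I-\tilde\Lambda_4$. The only new point compared with the time-independent case is the commutator term coming from $I\,\partial_t$: since $\Lambda_4$ now depends on $t$, one has
$$
\Lambda_4\,I\frac{\partial}{\partial t}\,\Lambda_4^{-1}
=
I\frac{\partial}{\partial t}
+
\Bigl(\partial_t\Lambda_4\Bigr)\Lambda_4^{-1},
$$
and $\bigl(\partial_t\Lambda_4\bigr)\Lambda_4^{-1}$ is a pseudodifferential operator of order $-1$ with symbol $\mathcal{O}(1/r)$, hence bounded on $L^2$; it is therefore absorbed into the error modulo $\mathscr{L}\bigl(L^2(\mathbb{T};\mathbb{R}^2)\bigr)$. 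All the symbolic computations \eqref{equation:351}--\eqref{equation:353} go through unchanged, with every coefficient now carrying a harmless $t$-dependence, and one arrives at $\mathscr{L}_4=\Lambda_4\mathscr{P}\Lambda_4^{-1}$ having the form \eqref{equation:L4} with $\tilde\beta_4(t,x)$ and $\tilde\gamma_4(t,x)$ given by the time-dependent analogues of \eqref{equation:beta4}--\eqref{equation:gamma4}.

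Next I would run Step 2 and Step 3 exactly as before. In Step 2 set $\sigma(\tilde\Lambda_5)(t,x,\xi)=\tfrac12\tilde\beta_4(t,x)J\,\varphi_r(\xi)/(i\xi)^2$; in Step 3 one needs the function
$$
\Psi_6(t,x)
=
-\frac12\int_0^x\operatorname{tr}\bigl(J\tilde\gamma(t,y)\bigr)\,dy
-
\frac{\operatorname{tr}\bigl(J\tilde\beta(t,x)\bigr)}{2}
-
\frac{\Psi_4(t,x)^2}{32},
$$
which is $2\pi$-periodic in $x$ precisely because of \eqref{equation:condition22}, and satisfies $\partial_x\Psi_6(t,x)=\mu(t,x)$ where $\tfrac12\{\gamma_5(t,x)-{}^t\gamma_5(t,x)\}=\mu(t,x)J$. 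The $C^1$-in-$t$ assumption guarantees $\Psi_6\in C_b^1$, so $\tilde\Lambda_5$ and $\tilde\Lambda_6$ are again $C^1$ in $t$ with operator norms $\mathcal{O}(1/r^2)$, and their $t$-derivatives contribute only bounded operators. The outcome is that $\mathscr{L}_6=\Lambda_6\Lambda_5\Lambda_4\,\mathscr{P}\,\Lambda_4^{-1}\Lambda_5^{-1}\Lambda_6^{-1}$ equals, modulo $\mathscr{L}\bigl(L^2(\mathbb{T};\mathbb{R}^2)\bigr)$,
$$
I\frac{\partial}{\partial t}
+
J\frac{\partial^4}{\partial x^4}
-
\frac{\partial}{\partial x}\,
\frac{\operatorname{tr}\bigl(J\tilde\beta(t,x)\bigr)}{2}\,
J\,
\frac{\partial}{\partial x}
+
\gamma_5^{\text{sym}}(t,x)\frac{\partial}{\partial x}
$$
as in \eqref{equation:L6}, with a symmetric first-order coefficient and a skew-symmetric (i.e.\ $\propto J$) second-order coefficient sandwiched symmetrically.

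Finally I would close by a classical energy estimate in $L^2(\mathbb{T};\mathbb{R}^2)$ for the modified operator. Since $J^4=I$, $J$ is skew-symmetric and $\gamma_5^{\text{sym}}(t,x)$ is symmetric, integration by parts shows that the skew-adjoint part of $-\bigl(J\partial_x^4-\partial_x\tfrac12\operatorname{tr}(J\tilde\beta)J\partial_x+\gamma_5^{\text{sym}}\partial_x\bigr)$ loses no derivative, so $\tfrac{d}{dt}\lVert\vec w\rVert_{L^2}^2$ is controlled by $\lVert\vec w\rVert_{L^2}^2+\lVert\vec h\rVert_{L^2}^2$ with constants depending only on the $C_b^1$-norms of the coefficients; Gronwall's inequality then gives existence, uniqueness and continuous dependence for $\mathscr{L}_6$ forward and backward in time, and conjugating back by the $L^2$-automorphism $\Lambda_6\Lambda_5\Lambda_4$ (which is itself time-dependent but uniformly bounded with uniformly bounded inverse, by the $\mathcal{O}(1/r)$, $\mathcal{O}(1/r^2)$ bounds) transfers this to \eqref{equation:pde3}--\eqref{equation:data3}. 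The main obstacle, though it is a mild one, is bookkeeping the extra $\bigl(\partial_t\Lambda_j\bigr)\Lambda_j^{-1}$ terms and verifying that the $C_b^1$-in-$t$ regularity of the coefficients is exactly what is needed to keep these in $\mathscr{L}\bigl(L^2(\mathbb{T};\mathbb{R}^2)\bigr)$ uniformly in $t$; everything else is identical to the time-independent proof, so I would simply indicate these changes and omit the repeated symbolic computations.
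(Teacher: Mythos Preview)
Your proposal is correct and follows essentially the same approach as the paper: repeat the three-step gauge transform of the direct proof of Theorem~\ref{theorem:main2} with time-dependent coefficients, observing that the only new contribution is the bounded operator arising from $\Lambda_j(t)\,\partial_t\,\Lambda_j(t)^{-1}$, which is absorbed modulo $\mathscr{L}\bigl(L^2(\mathbb{T};\mathbb{R}^2)\bigr)$. One minor slip: the sign in your formula $\Lambda_4\,I\partial_t\,\Lambda_4^{-1}=I\partial_t+(\partial_t\Lambda_4)\Lambda_4^{-1}$ should be a minus (differentiate $\Lambda_4\Lambda_4^{-1}=I$), matching the paper's $I\partial_t-(\partial_t\tilde\Lambda)\Lambda^{-1}$; this does not affect the argument since only boundedness is used.
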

Indeed, if $\Lambda_7(t)=I+\tilde{\Lambda}_7(t)$ is invertible and 
$\sigma\bigl(\tilde{\Lambda}_7(t)\bigr)$ is an 
$S^{-1}\bigl(\mathbb{T};M(2)\bigr)$-valued function of class $C_b^1$, 
then 
$$
\Lambda_7(t)
I
\frac{\partial}{\partial t}
\Lambda_7(t)^{-1}
=
I
\frac{\partial}{\partial t}
-
\frac{\partial \tilde{\Lambda}_7}{\partial t}
\Lambda_7(t)^{-1}
\equiv
I
\frac{\partial}{\partial t}
$$
modulo a class of all 
$\mathscr{L}\bigl(L^2(\mathbb{T};\mathbb{R}^2)\bigr)$-valued 
bounded continuous functions in $t\in\mathbb{R}$. 
This is the only difference between 
the sufficiency of Theorem~\ref{theorem:main2} and that of Theorem~\ref{theorem:main3}. 
The other parts of the proof of Theorem~\ref{theorem:main3} 
are exacctly the same as that of Theorem~\ref{theorem:main2}. 
We omit the detail. 
\section{Dispersive flows and moving frames}
\label{section:frame} 
We turn our attention to the dispersive flow \eqref{equation:onoderasystem}. 
In this section we derive a fourth-order dispersive system like \eqref{equation:pde2} 
from the equation of the dispersive flow \eqref{equation:onoderasystem}. 
We see that 
if the sectional curvature of the target Riemann surface $(N,\tilde{J},g)$ is constant, 
then the derived system satisfies the conditions 
\eqref{equation:condition21} and \eqref{equation:condition22}. 
We begin with some preliminaries for the derivation. 
\par
First, we present local expressions of covariant derivative along $u$. 
We denote the pullback bundle of TN by $u^{-1}TN$, 
and all the smooth sections of $u^{-1}TN$ by $\Gamma(u^{-1}TN)$. 
Let $u^1$ and $u^2$ be local coordinates of N, 
and let $\Gamma^\alpha_{\beta\gamma}(u)$ ($\alpha,\beta,\gamma=1,2$)  
be the Christoffel symbol of $(N,\tilde{J},g)$. 
For 
$$
Y
=
Y^1
\left(
\frac{\partial}{\partial u^1}
\right)_u
+
Y^2
\left(
\frac{\partial}{\partial u^2}
\right)_u
\in 
\Gamma(u^{-1}TN), 
$$
$\nabla_xY$ and $\nabla_tY$ are locally written as 
\begin{align*}
  \nabla_xY
& =
  \nabla_{u_x}^NY
  =
  \sum_{\alpha=1}^2
  \left\{
  \frac{\partial Y^\alpha}{\partial x}
  +
  \sum_{\beta,\gamma=1}^2
  \Gamma^\alpha_{\beta\gamma}(u)
  \frac{\partial u^\beta}{\partial x}
  Y^\gamma
  \right\}
  \left(
  \frac{\partial}{\partial u^\alpha}
  \right)_u,
\\
  \nabla_tY
& =
  \nabla_{u_t}^NY
  =
  \sum_{\alpha=1}^2
  \left\{
  \frac{\partial Y^\alpha}{\partial t}
  +
  \sum_{\beta,\gamma=1}^2
  \Gamma^\alpha_{\beta\gamma}(u)
  \frac{\partial u^\beta}{\partial t}
  Y^\gamma
  \right\}
  \left(
  \frac{\partial}{\partial u^\alpha}
  \right)_u.  
\end{align*}
\par
Secondly, we introcude a moving frame along $u$ to describe sections of $u^{-1}TN$.  
For the sake of simplicity, we assume in addition that 
$u_x(t,0)\ne0$ for all $t\in\mathbb{R}$. 
Under this assumption, we set $e_0(t)=u_x(t,0)/g_{u(t,0)}(u_x,u_x)^{1/2}$ for short. 
This is the unit tangent vector of the closed curve 
$C(t)=\{u(t,x) \ \vert \ x\in\mathbb{T}\}$ at $u(t,0)$ for any fixed $t\in\mathbb{R}$. 
Let $e(t,x)$ be the parallel transport of $e_0(t)$ along $C(t)$. 
In other words, 
$e(t,x)$ is the unique solution to the initial value problem of 
the system of linear ordinary differential equations 
with an independent variable $x\in\mathbb{T}$ 
and a parameter $t\in\mathbb{R}$ of the form
$$
\nabla_xe(t,\cdot)=0 
\quad\text{in}\quad
\mathbb{T},
\qquad
e(t,0)=e_0(t). 
$$
Since $\nabla^N\tilde{J}=0$, $\tilde{J}e$ solves the initial value problem 
$$
\nabla_x\tilde{J}e(t,\cdot)=0 
\quad\text{in}\quad
\mathbb{T},
\qquad
\tilde{J}e(t,0)=\tilde{J}e_0(t),  
$$
and is the parallel transport of the unit normal vector 
$\tilde{J}e_0(t)$ of $C(t)$ at $u(t,0)$. 
It is easy to see that the pair of $e$ and $\tilde{J}e$ is a moving frame along $u$. 
We remark that $e(t,x)$ is not necessarily $2\pi$-periodic in $x$ 
since the coefficients of the lower order terms 
$\Gamma^\alpha_{\beta\gamma}(u)\partial u^\beta/\partial x$ 
do not necessarily have $2\pi$-periodic primirive. 
In other words, roughly speaking, 
$$
\int_0^{2\pi}
\Gamma^\alpha_{\beta\gamma}(u)
\frac{\partial u^\beta}{\partial x}
dx
$$ 
does not necessarily vanish. 
\par
Let $R$ be the Riemann curvature tensor of $(N,\tilde{J},g)$. 
The sectional curvature at $u \in N$ is denoted by $K(u)$.  
Here we summarize propeties of $R$ used in our computations below. 
\begin{lemma}
\label{theorem:curvature}
We have the following properties.
\begin{itemize}
\item[{\rm (i)}] 
$R(e,e)e=R(\tilde{J}e,\tilde{J}e)e=0$. 
\item[{\rm (ii)}] 
$\tilde{J}R(\cdot,\cdot)e=R(\cdot,\cdot)\tilde{J}e$. 
\item[{\rm (iii)}] 
$R(\tilde{J}e,e)e=-R(e,\tilde{J}e)e=K(u)\tilde{J}e$, 
$R(\tilde{J}e,e)\tilde{J}e=-R(e,\tilde{J}e)\tilde{J}e=-K(u)e$. 
\end{itemize}
\end{lemma}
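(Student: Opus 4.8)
The statement to prove is Lemma~\ref{theorem:curvature}, which lists three algebraic properties of the Riemann curvature tensor $R$ of a K\"ahler surface, evaluated on the orthonormal frame $\{e,\tilde{J}e\}$.

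The plan is to exploit the fact that $(N,\tilde{J},g)$ is a Riemann surface, so at each point the tangent space is two-dimensional with orthonormal basis $\{e,\tilde{J}e\}$, and the curvature tensor of any two-dimensional space is completely determined by the sectional curvature $K(u)$ via the formula
$$
R(X,Y)Z
=
K(u)
\bigl\{
g(Y,Z)X-g(X,Z)Y
\bigr\}.
$$
First I would record this identity, which is the only structural input needed: it follows from the standard fact that in dimension two the curvature tensor has a single independent component, together with the symmetries of $R$ and the definition $K(u)=g(R(e,\tilde{J}e)\tilde{J}e,e)$ for the orthonormal pair $\{e,\tilde{J}e\}$. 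Everything else is then a direct substitution.

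For (i): by antisymmetry of $R$ in its first two slots, $R(e,e)=0$ and $R(\tilde{J}e,\tilde{J}e)=0$ as operators, so both terms vanish trivially; alternatively, plug $X=Y=e$ (resp. $X=Y=\tilde{J}e$) into the dimension-two formula and note the bracket collapses. For (iii): apply the formula with $(X,Y,Z)=(\tilde{J}e,e,e)$, using $g(e,e)=1$, $g(\tilde{J}e,e)=0$, to get $R(\tilde{J}e,e)e=K(u)\{g(e,e)\tilde{J}e-g(\tilde{J}e,e)e\}=K(u)\tilde{J}e$; the sign-flipped identity $R(e,\tilde{J}e)e=-K(u)\tilde{J}e$ is then antisymmetry. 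Similarly $(X,Y,Z)=(\tilde{J}e,e,\tilde{J}e)$ gives $R(\tilde{J}e,e)\tilde{J}e=K(u)\{g(e,\tilde{J}e)\tilde{J}e-g(\tilde{J}e,\tilde{J}e)e\}=-K(u)e$, using $g(\tilde{J}e,\tilde{J}e)=1$.

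For (ii): this is where the K\"ahler (more precisely, almost-Hermitian with $\nabla^N\tilde{J}=0$) hypothesis enters. The cleanest route is to observe that $\tilde{J}$ is parallel and a $g$-isometry, hence commutes with the curvature operator: $R(X,Y)\tilde{J}Z=\tilde{J}R(X,Y)Z$ for all $X,Y,Z$, which is the standard identity obtained by commuting $\nabla$ past $\tilde{J}$ in the definition $R(X,Y)=\nabla_X\nabla_Y-\nabla_Y\nabla_X-\nabla_{[X,Y]}$. Taking $Z=e$ gives exactly (ii). Alternatively, since we already have the dimension-two formula and $\tilde{J}$ preserves $g$, one can check (ii) componentwise against $\{e,\tilde{J}e\}$ using parts (i) and (iii); but invoking parallelism of $\tilde{J}$ is shorter and more transparent.

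I do not anticipate a genuine obstacle here: the lemma is a routine consequence of the curvature identities for surfaces plus $\nabla^N\tilde{J}=0$. The only point requiring a little care is making sure the sign conventions for $R$ and for $K(u)$ are consistent with the rest of the paper, so that the signs in (iii) come out as stated; I would fix the convention $R(X,Y)=\nabla_X\nabla_Y-\nabla_Y\nabla_X-\nabla_{[X,Y]}$ and $K(u)=g(R(e,\tilde{J}e)\tilde{J}e,e)$ at the outset and verify everything against it.
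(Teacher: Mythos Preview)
Your proposal is correct. For (i) and (ii) you argue exactly as the paper does: antisymmetry of $R$ in its first two slots for (i), and parallelism of $\tilde{J}$ (i.e.\ $\nabla^N\tilde{J}=0$) together with the definition $R(X,Y)=\nabla^N_X\nabla^N_Y-\nabla^N_Y\nabla^N_X-\nabla^N_{[X,Y]}$ for (ii).

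The only genuine difference is in (iii). You invoke the classical two-dimensional formula $R(X,Y)Z=K(u)\{g(Y,Z)X-g(X,Z)Y\}$ as a single structural input and then substitute. The paper instead works from first principles: it expands $R(\tilde{J}e,e)e$ in the orthonormal basis $\{e,\tilde{J}e\}$, kills the $e$-component via the symmetry $g(R(X,Y)Z,W)=g(R(Z,W)X,Y)$ combined with (i), and identifies the $\tilde{J}e$-component with $K(u)$ straight from the definition of sectional curvature. Your route is slightly quicker once the dimension-two formula is accepted; the paper's route is more self-contained and avoids quoting that formula. Either way the content is the same, and your remark about fixing sign conventions at the outset is well taken and matches the paper's convention $K(u)=g_u(R(X,Y)Y,X)/\{g_u(X,X)g_u(Y,Y)-g_u(X,Y)^2\}$.
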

\begin{proof}[{\bf Proof}] 
Let $X,Y,Z,W$ be vector fields on $N$. 
The claim (i) follows from a basic property of Riemann curvature tensor 
$R(X,Y)Z+R(Y,X)Z=0$. 
The claim (ii) follows from the definition of Riemann curvature tensor 
$$
R(X,Y)Z
=
\nabla^N_X\nabla^N_YZ
-
\nabla^N_Y\nabla^N_XZ
-
\nabla^N_{XY-YX}Z
$$
and the K\"ahler condition $\nabla^N\tilde{J}=0$. 
For the claim (iii), it suffices to show that $R(\tilde{J}e,e)e=K\tilde{J}e$. 
Since the pair of $e$ and $\tilde{J}e$ is an orthonormal basis of each tangent space $T_uN$, 
we have 
$$
R(\tilde{J}e,e)e
=
g\bigl(R(\tilde{J}e,e)e,e\bigr)
e
+
g\bigl(R(\tilde{J}e,e)e,\tilde{J}e\bigr)
\tilde{J}e. 
$$
On one hand, combining (i) and a basic property of Riemann curvature tensor 
$$
g\bigl(R(X,Y)Z,W\bigr)
=
g\bigl(R(Z,W)X,Y\bigr),  
$$
we have $g\bigl(R(\tilde{J}e,e)e,e\bigr)=0$. 
On the other hand, the definition of the sectional curvature at $u \in N$ is 
$$
K(u)
=
\frac{g_u\bigl(R(X,Y)Y,X\bigr)}{g_u(X,X)g_u(Y,Y)-g_u(X,Y)^2} 
\quad\text{for}\quad
X, Y \in T_uN\setminus\{0\},  
$$
and we have $g\bigl(R(\tilde{J}e,e)e,\tilde{J}e\bigr) = K(u)$. 
Hence we obtain $R(\tilde{J}e,e)e=K(u)\tilde{J}e$. 
\end{proof}
\par
Here we begin with the derivation of a system of partial differential equations. 
Let $l$ be an integer not smaller than four. 
Set $u_x=\xi e +\eta \tilde{J}e$ and $U=\nabla_x^lu_x=Ve+W\tilde{J}e$. 
In what follows we denote different functions of 
$u$, $u_x$, ..., $\nabla_x^lu$ by the same notation ``$\text{OK}$'', 
and we set $g(\cdot,\cdot)=g_{u(t,x)}(\cdot,\cdot)$ and 
$\tilde{J}=\tilde{J}\bigl(u(t,x)\bigr)$ for short. 
Apply $\nabla_x^{l+1}$ to \eqref{equation:onoderasystem}. 
Then we have 
\begin{align}
  \nabla_tU
& =
  a\tilde{J}\nabla_x^4U+\tilde{J}\nabla_x^2U
\nonumber
\\
& +
  \sum_{\alpha=0}^{l-1}
  \nabla_x^{l-1-\alpha}
  \bigl\{
  R(u_t,u_x)
  \nabla_x^\alpha
  u_x
  \bigr\}
\label{equation:401}
\\
& +
  b 
  \sum_{\alpha+\beta+\gamma=l+1}
  \frac{(l+1)!}{\alpha!\beta!\gamma!}
  g\bigl(\nabla_x^\beta u_x,\nabla_x^\gamma u_x\bigr)
  \tilde{J}\nabla_x^{\alpha+1}u_x
\label{equation:402}
\\
& +
  c
  \sum_{\alpha+\beta+\gamma=l+1}
  \frac{(l+1)!}{\alpha!\beta!\gamma!}
  g\bigl(\nabla_x^{\beta+1} u_x,\nabla_x^\gamma u_x\bigr)
  \tilde{J}\nabla_x^\alpha u_x. 
\label{equation:403}
\end{align}
We split each term in the above equation into main part and OK part. 
It follows from the definition of the covariant derivative $\nabla_t$ that 
\begin{equation}
\nabla_tU
=
V_te
+
W_t\tilde{J}e
+
V\nabla_te
+
W \tilde{J}\nabla_te
=
V_te
+
W_t\tilde{J}e
+ 
\ \text{OK}.
\label{equation:411} 
\end{equation}
Since $\nabla_xe=\nabla_x\tilde{J}e=0$, we have 
\begin{align}
  a\tilde{J}\nabla_x^4U
  +
  \tilde{J}\nabla_x^2U
& =
  a\tilde{J}\nabla_x^4(Ve+W\tilde{J}e)
  +
  \tilde{J}\nabla_x^2(Ve+W\tilde{J}e)
\nonumber
\\
& =
  -
  aW_{xxxx}e
  +
  aV_{xxxx}\tilde{J}e
  -
  W_{xx}e
  +
  V_{xx}\tilde{J}e.
\label{equation:412} 
\end{align}
\par
We compute \eqref{equation:401}. 
A simple computation yields 
\begin{align}
  \text{\eqref{equation:401}}
& =
  \sum_{\alpha=0}^{l-1}
  \nabla_x^{l-1-\alpha}
  \bigl\{
  R(a\tilde{J}\nabla_x^3u_x,u_x)
  \nabla_x^\alpha
  u_x
  \bigr\}
  +
  \text{OK}
\nonumber
\\
& =
  \sum_{\alpha=0}^1
  \nabla_x^{l-1-\alpha}
  \bigl\{
  R(a\tilde{J}\nabla_x^3u_x,u_x)
  \nabla_x^\alpha
  u_x
  \bigr\}
  +
  \text{OK}. 
\label{equation:420}
\end{align}
Set 
$$
\xi^{(\alpha)}(t,x)
=
\frac{\partial^\alpha \xi}{\partial x^\alpha}
(t,x),
\quad
\eta^{(\alpha)}(t,x)
=
\frac{\partial^\alpha \eta}{\partial x^\alpha}
(t,x),
\quad
\alpha=0,1,2,\dotsc
$$
for short. 
We applying Lemma~\ref{theorem:curvature} to the right hand side of \eqref{equation:420}. 
We deduce that 
\begin{align}
  R(a\tilde{J}\nabla_x^3u_x,u_x)
  \nabla_x^\alpha
  u_x
& = 
  R
  \bigl(
  a\tilde{J}
  (\xi^{(3)} e + \eta^{(3)} \tilde{J} e),
  \xi e + \eta \tilde{J} e
  \bigr) 
  (\xi^{(\alpha)} e + \eta^{(\alpha)} \tilde{J} e)
\nonumber
\\
& =
  a
  R
  \bigl(
  \xi^{(3)} \tilde{J} e - \eta^{(3)} e),
  \xi e + \eta \tilde{J} e
  \bigr) 
  (\xi^{(\alpha)} e + \eta^{(\alpha)} \tilde{J} e)
\nonumber
\\
& =
  a
  \{\xi\xi^{(3)}+\eta\eta^{(3)}\}
  R(Je,e)
  (\xi^{(\alpha)} e + \eta^{(\alpha)} \tilde{J} e)
\nonumber
\\
& =
  \bigl\{
  -
  aK(u)\xi\eta^{(\alpha)}\xi^{(3)}
  -
  aK(u)\eta\eta^{(\alpha)}\eta^{(3)}
  \bigr\}
  e
\nonumber
\\
& +
  \bigl\{
  aK(u)\xi\xi^{(\alpha)}\xi^{(3)}
  +
  aK(u)\eta\xi^{(\alpha)}\eta^{(3)}
  \bigr\}
  \tilde{J}e.
\label{equation:421}
\end{align}
Substitute 
\eqref{equation:421}, 
into 
\eqref{equation:420}.  
We obtain 
\begin{align}
  \text{\eqref{equation:401}}
& =  
  \{-aK(u)\xi\eta V_{xx} - aK(u)\eta^2 W_{xx}\}
  e
\nonumber
\\
& +
  \{aK(u)\xi^2 V_{xx} + aK(u)\xi\eta W_{xx}\}
  \tilde{J}e
\nonumber
\\
& +
  \bigl\{
  -
  a(l-1)
  \bigl(K(u)\xi\eta\bigr)_x
  V_x
  -
  a(l-1)
  \bigl(K(u)\eta^2\bigr)_x
  W_x
  \bigr\}
  e
\nonumber
\\
& +
  \bigl\{
  a(l-1)
  \bigl(K(u)\xi^2\bigr)_x
  V_x
  +
  a(l-1)
  \bigl(K(u)\xi\eta\bigr)_x
  W_x
  \bigr\}
  \tilde{J}e
\nonumber
\\
& +
  \{
  -
  aK(u)\xi\eta_xV_x
  -
  aK(u)\eta\eta_xW_x
  \}
  e
\nonumber
\\
& +
  \{
  aK(u)\xi\xi_xV_x
  +
  aK(u)\eta\xi_xW_x
  \}
  \tilde{J}e
  +
  \text{OK}.
\label{equation:424}
\end{align}
\par
We compute \eqref{equation:402}. Since $\nabla^Ng=0$, we deduce that 
\begin{align}
  \text{\eqref{equation:402}}
& =
  \ 
  \text{sum of cases of}\  
\nonumber
\\
& \qquad
  (\alpha,\beta,\gamma)
  =
  (l+1,0,0), 
  (l,1,0),
  (l,0,1),
  (0,l+1,0),
  (0,0,l+1)
\nonumber
\\
& + 
  \ \text{OK}
\nonumber
\\
& =
  bg(u_xu_x)\tilde{J}\nabla_x^2U
  +
  2(l+1)b
  g(\nabla_xu_x,u_x)\tilde{J}\nabla_xU
\nonumber
\\
& +
  2b
  g(\nabla_xU,u_x)
  \tilde{J}\nabla_xu_x
  +
  \text{OK}
\nonumber
\\
& =
  b
  \{\xi^2+\eta^2\}
  \tilde{J}\{V_{xx} e + W_{xx} \tilde{J}e\}
  +
  (l+1)b
  \{\xi^2+\eta^2\}_x
  \tilde{J}\{V_x e + W_x \tilde{J}e\}
\nonumber
\\
& +
  2b
  g\bigl(V_x e + W_x \tilde{J}e, \xi e + \eta \tilde{J}e\bigr)
  \tilde{J}\{\xi_x e + \eta_x \tilde{J}e\}
  +
  \text{OK}
\nonumber
\\
& =
  b
  \{\xi^2+\eta^2\}
  \{-W_{xx} e + V_{xx} \tilde{J}e\}
  +
  (l+1)b
  \{\xi^2+\eta^2\}_x
  \{-W_x e + V_x \tilde{J}e\}
\nonumber
\\
& +
  2b
  \{\xi V_x + \eta W_x\}
  \{-\eta_x e + \xi_x \tilde{J}e\}
  +
  \text{OK}
\nonumber
\\
& =
  b
  \{\xi^2+\eta^2\}
  \{-W_{xx} e + V_{xx} \tilde{J}e\}
\nonumber
\\
& +
  b
  \bigl[
  -
  2\xi\eta_x V_x
  -
  \{(l+1)\xi^2+(l+2)\eta^2\}_x W_x
  \bigr]
  e
\nonumber
\\
& +
  b
  \bigl[
  \{(l+2)\xi^2+(l+1)\eta^2\}_x V_x
  +
  2\xi_x\eta W_x
  \bigr]
  \tilde{J}e
  +
  \text{OK}.
\label{equation:425}  
\end{align}
\par
We compute \eqref{equation:403}. 
In the same way as \eqref{equation:425}, we deduce that 
\begin{align}
  \text{\eqref{equation:403}}
& =
  \ 
  \text{sum of cases of}\  
\nonumber
\\
& \qquad
  (\alpha,\beta,\gamma)
  =
  (0,l+1,0), 
  (1,l,0),
  (0,l,1),
  (l+1,0,0),
  (0,0,l+1)
\nonumber
\\
& + 
  \ \text{OK}
\nonumber
\\
& =
  cg(\nabla_x^2U,u_x)\tilde{J}u_x
\nonumber
\\
& +
  c(l+1)
  g(\nabla_xU,u_x)\tilde{J}\nabla_xu_x
  +
  c(l+1)
  g(\nabla_xU,\nabla_xu_x)\tilde{J}u_x
\nonumber
\\
& +
  cg(\nabla_xu_x,u_x)\tilde{J}\nabla_xU
  +
  cg(\nabla_xu_x,\nabla_xU)\tilde{J}u_x
  +
  \text{OK}
\nonumber
\\
& =
  c
  g\bigl(V_{xx} e + W_{xx} \tilde{J}e, \xi e + \eta \tilde{J}e\bigr)
  \tilde{J}\{\xi e + \eta \tilde{J}e\}
\nonumber
\\
& +
  c(l+1)
  g\bigl(V_x e + W_x \tilde{J}e, \xi e + \eta \tilde{J}e\bigr)
  \tilde{J}\{\xi_x e + \eta_x \tilde{J}e\}
\nonumber
\\
& +
  c(l+2)
  g\bigl(V_x e + W_x \tilde{J}e, \xi_x e + \eta_x \tilde{J}e\bigr)
  \tilde{J}\{\xi e + \eta \tilde{J}e\}
\nonumber
\\
& +
  \frac{c}{2}
  \{\xi^2+\eta^2\}_x
  \tilde{J}\{V_x e + W_x \tilde{J}e\}
  +
  \text{OK}
\nonumber
\\
& =
  c
  \{\xi V_{xx} + \eta W_{xx}\}
  \{- \eta e + \xi \tilde{J}e\}
\nonumber
\\
& +
  c(l+1)
  \{\xi V_x + \eta W_x\}
  \{- \eta_x e + \xi_x \tilde{J}e\}
\nonumber
\\
& +
  c(l+2)
  \{\xi_x V_x + \eta_x W_x\}
  \{-\eta e +\xi \tilde{J}e\}
\nonumber
\\
& +
  \frac{c}{2}
  \{\xi^2+\eta^2\}_x
  \{- W_x e + V_x \tilde{J}e\}
  +
  \text{OK}
\nonumber
\\
& =
  c
  \{-\xi\eta V_{xx} -\eta^2 W_{xx}\}e
  +
  c\{\xi^2 V_{xx} + \xi\eta W_{xx}\}\tilde{J}e
\nonumber
\\
& +
  c
  \left[
  -
  \{(l+1)(\xi\eta)_x+\xi_x\eta\}V_x
  -
  \left\{
  \frac{1}{2}
  \xi^2
  +
  (l+2)\eta^2
  \right\}_x
  W_x
  \right]
  e
\nonumber
\\
& +
  c
  \left[
  \left\{
  (l+2)\xi^2
  +
  \frac{1}{2}\eta^2
  \right\}_x
  V_x
  +
  \{(l+1)(\xi\eta)_x+\xi\eta_x\}
  W_x
  \right]
  \tilde{J}e
  +
  \text{OK}.
\label{equation:426}
\end{align}
\par
Combining 
\eqref{equation:411}, 
\eqref{equation:412}, 
\eqref{equation:424}, 
\eqref{equation:425} 
and  
\eqref{equation:426}, 
we obtain
\begin{equation}
\left\{
I
\frac{\partial}{\partial t}
-
a
J
\frac{\partial^4}{\partial x^4}
+
\hat{\beta}(t,x)
\frac{\partial}{\partial x^2}
+
\hat{\gamma}(t,x)
\frac{\partial}{\partial x}
\right\}
\begin{bmatrix}
V \\ W 
\end{bmatrix}
=
\ \text{OK},
\label{equation:derived} 
\end{equation}
where 
$$
\hat{\beta}(t,x)
=
\begin{bmatrix}
\hat{\beta}_{11}(t,x) & \hat{\beta}_{12}(t,x) 
\\ 
\hat{\beta}_{21}(t,x) & \hat{\beta}_{22}(t,x) 
\end{bmatrix},
\quad
\hat{\gamma}(t,x)
=
\begin{bmatrix}
\hat{\gamma}_{11}(t,x) & \hat{\gamma}_{12}(t,x) 
\\ 
\hat{\gamma}_{21}(t,x) & \hat{\gamma}_{22}(t,x)
\end{bmatrix}, 
$$
\begin{align*}
  \hat{\beta}_{11}
& =
  \{aK(u)+c\}\xi\eta,
\\ 
  \hat{\beta}_{12}
& =
  1+b\xi^2+\{aK(u)+b+c\}\eta^2,
\\ 
  \hat{\beta}_{21}
& =
  -1-\{aK(u)+b+c\}\xi^2-b\eta^2,
\\ 
  \hat{\beta}_{22}
& =
  -\{aK(u)+c\}\xi\eta,
\end{align*}
\begin{align*}
  \hat{\gamma}_{11}
& =
  \frac{\partial}{\partial x}
  \left[
  \left\{
  a(l-1)K(u)+c(l+2)
  \right\}
  \xi\eta
  \right]
  +
  \{
  aK(u)+2b-c
  \}
  \xi\eta_x,
\\ 
  \hat{\gamma}_{12}
& =
  \frac{\partial}{\partial x}
  \left[
  \left\{
  b(l+2)+\frac{c}{2}
  \right\}
  \xi^2
  +
  \left\{
  \frac{a}{2}(2l-1)K(u)
  +
  (b+c)(l+2)
  \right\}
  \eta^2
  \right]
\nonumber
\\
& -
  \frac{a}{2}
  \left\{
  \frac{\partial}{\partial x}
  K(u)
  \right\}
  \eta^2,
\\ 
  \hat{\gamma}_{21}
& =
  -
  \frac{\partial}{\partial x}
  \left[
  +
  \left\{
  \frac{a}{2}(2l-1)K(u)
  +
  (b+c)(l+2)
  \right\}
  \xi^2
  +
  \left\{
  b(l+2)+\frac{c}{2}
  \right\}
  \eta^2
  \right]
\nonumber
\\
& +
  \frac{a}{2}
  \left\{
  \frac{\partial}{\partial x}
  K(u)
  \right\}
  \xi^2,
\\ 
  \hat{\gamma}_{22}
& =
  -
  \frac{\partial}{\partial x}
  \left[
  \left\{
  a(l-1)K(u)+c(l+2)
  \right\}
  \xi\eta
  \right]
  -
  \{
  aK(u)+2b-c
  \}
  \xi_x\eta.
\end{align*}
It is easy to see that 
$\hat{\beta}_{11}(t,x)+\hat{\beta}_{22}(t,x)\equiv0$, 
$$
\hat{\gamma}_{12}(t,x)
-
\hat{\gamma}_{21}(t,x)
=
\frac{\partial}{\partial x}
\{H(u)g(u_x,u_x)\}
-
\frac{a}{2}
\left\{
\frac{\partial}{\partial x}
K(u)
\right\}
g(u_x,u_x),
$$
$$
H(u)
=
\frac{a}{2}
(2l-1)
K(u)
+
b(2l+3)
+
\frac{c}{2}(2l+5),
$$
$$
\int_0^{2\pi}
\{
\hat{\gamma}_{12}(t,x)
-
\hat{\gamma}_{21}(t,x)
\}
dx
=
-
\frac{a}{2}
\int_0^{2\pi}
\left\{
\frac{\partial}{\partial x}
K(u)
\right\}
g(u_x,u_x)
dx,
\quad
t\in\mathbb{R}. 
$$
\par
Unfortunately, $V(t,x)$ and $W(t,x)$ are not $2\pi$-periodic functions in $x$. 
We shall obtain a system for $2\pi$-periodic functions in $x$ by correction. 
We denote by $2\pi\theta(t)$ the correction angle for the closed curve $C(t)$, 
which is the angle formed by $e(t,0)$ and $e(t,2\pi)$ in $T_{u(t,0)}N$, 
and said to be the holonomy angle of $C(t)$ at $u(t,0) \in N$. 
If $C(t)$ is the boundary enclosing a contractive domain  $D(t)$, 
then $\theta(t)$ is given by 
$$
\theta(t)
=
\frac{1}{2\pi}
\int_{D(t)}
K(u)
du^1 \wedge du^2.
$$
See \cite[Section~7.3]{ST} for this.  
Set 
$$
\vec{Z}(t,x)
=
P\bigl(\theta(t)x\bigr)
\begin{bmatrix}
V(t,x) \\ W(t,x)
\end{bmatrix},
\qquad
P(s)
=
\begin{bmatrix}
\cos{s} & -\sin{s} \\ \sin{s} & \cos{s}
\end{bmatrix},
\quad
s\in\mathbb{R}. 
$$
Then $\vec{Z}$ is $2\pi$-periodic in $x$. 
The normalized angle $\theta(t)$ is determied by $u(t,x)$ and $u_x(t,x)$, 
and 
$\theta(t)$ is $C^1$ provided that 
$u,u_x$, 
$\nabla_xu_x$, 
$\nabla_x^2u_x$, 
$\nabla_x^3u_x,$ 
and 
$\nabla_x^4u_x$ 
are continuous. 
Note that 
$$
J=P\left(\frac{\pi}{2}\right),
\quad
\frac{\partial}{\partial x}
P\left(\theta(t)x\right)
=
\theta(t)
P\left(\theta(t)x+\frac{\pi}{2}\right)
=
\theta(t)
J
P\left(\theta(t)x\right)
=
\theta(t)
P\left(\theta(t)x\right)
J. 
$$
Set $P=P\bigl(\theta(t)x\bigr)$ for short. 
Multiply \eqref{equation:derived} by $P$ from the left. 
Then 
\begin{equation}
P
\left\{
I
\frac{\partial}{\partial t}
-
a
J
\frac{\partial^4}{\partial x^4}
+
\hat{\beta}(t,x)
\frac{\partial}{\partial x^2}
+
\hat{\gamma}(t,x)
\frac{\partial}{\partial x}
\right\}
{}^tP
\vec{Z}
=
P\vec{F}(t,x).
\label{equation:480} 
\end{equation}
We compute this in detail. 
Simple computations give 
\begin{align}
  P
  I\frac{\partial}{\partial t}
  {}^tP
& =
  I\frac{\partial}{\partial t}
  -
  \theta(t)J,
\label{equation:481}
\\
  -a
  P
  J\frac{\partial^4}{\partial x^4}
  {}^tP
& =
  -
  aJ\frac{\partial^4}{\partial x^4}
  -
  4a\theta(t)
  I
  \frac{\partial^3}{\partial x^3}
  +
  6a\theta(t)^2
  J
  \frac{\partial^2}{\partial x^2}
  +
  4a\theta(t)^3
  I
  \frac{\partial}{\partial x}
  -
  a\theta(t)^4
  J,
\label{equation:482}
\\
  P
  \hat{\beta}(t,x)
  \frac{\partial^2}{\partial x^2}
  {}^tP
& =
  P\hat{\beta}(t,x){}^tP
  \frac{\partial^2}{\partial x^2}
  -
  2\theta(t)
  P\hat{\beta}(t,x){}^tPJ
  \frac{\partial}{\partial x}
  -
  \theta(t)^2
  P\hat{\beta}(t,x){}^tP,
\label{equation:483}
\\
  P
  \hat{\gamma}(t,x)
  \frac{\partial}{\partial x}
  {}^tP
& =
  P\hat{\gamma}(t,x){}^tP
  \frac{\partial}{\partial x}
  -
  \theta(t)
  P\hat{\gamma}(t,x){}^tP
  J.
\label{equation:484}
\end{align}
Substitute 
\eqref{equation:481}, 
\eqref{equation:482}, 
\eqref{equation:483} 
and 
\eqref{equation:481} 
into 
\eqref{equation:480}. 
We obtain 
\begin{equation}
\left\{
I
\frac{\partial}{\partial t}
-
a
J
\frac{\partial^4}{\partial x^4}
-
a
\theta(t)
I
\frac{\partial^3}{\partial x^3}
+
\hat{\beta}_1(t,x)
\frac{\partial^2}{\partial x^2}
+
\hat{\gamma}_1(t,x)
\frac{\partial}{\partial x}
\right\}
\vec{Z}
=
\vec{F}_1(t,x),
\label{equation:corrected} 
\end{equation}
where 
\begin{align*}
  \hat{\beta}_1(t,x)
& =
  P\hat{\beta}(t,x){}^tP
  +
  6a\theta(t)^2J,
\\
  \hat{\gamma}_1(t,x)
& =
  P\hat{\gamma}_1(t,x){}^tP
  +
  4a\theta(t)^3I
  -
  2\theta(t)
  P\hat{\beta}(t,x){}^tPJ,
\end{align*}
and $\vec{F}_1(t,x)$ is a function of 
$u$, $u_x$, $\nabla_xu_x$, $\nabla_x^2u_x$, $\nabla_x^3u_x$ and $\nabla_x^4u_x$. 
It is easy to chack that  
$$
\operatorname{tr}\bigl(\hat{\beta}(t,x)\bigr)
=
\operatorname{tr}\bigl(\hat{\beta}_1(t,x)\bigr),
\quad
\operatorname{tr}\bigl(J\hat{\gamma}(t,x)\bigr)
=
\operatorname{tr}\bigl(J\hat{\gamma}_1(t,x)\bigr).
$$
We see \eqref{equation:corrected} 
as a system of partial differential equations for $\vec{Z}$. 
The third order term in \eqref{equation:corrected} 
has no essential influence on the well-posedness of the initial value problem. 
Theorem~\ref{theorem:main3} shows that 
if $K(u)$ is constant, 
then the initial value problem for \eqref{equation:corrected} is $L^2$-well-posed. 
In other words, if the sectional curvature of the target Riemann surface is constant,  
then the initial value problem for \eqref{equation:onoderasystem} is made to be solvable. 
\vspace{11pt}
\\
{\bf Acknowledgements}. 
The author would like to thank Eiji Onodera 
for invaluable comments and helpful information on the holomomy. 

\end{document}